\newtheorem{theorem}{Theorem}
\newtheorem{corollary}[theorem]{Corollary}
\newtheorem{definition}[theorem]{Definition}
\newtheorem{example}[theorem]{Example}
\newtheorem{lemma}[theorem]{Lemma}
\newtheorem{proposition}[theorem]{Proposition}
\newtheorem{remark}[theorem]{Remark}
\newtheorem{prop}[theorem]{Proposition}
\newtheorem{desideratum}{Desideratum}
\begin{document}

\title{Generalized functions beyond distributions}
\author{Vieri Benci\thanks{
Dipartimento di Matematica, Universit\`{a} degli Studi di Pisa, Via F.
Buonarroti 1/c, 56127 Pisa, ITALY and Department of Mathematics, College of
Science, King Saud University, Riyadh, 11451, SAUDI ARABIA. e-mail: \texttt{%
benci@dma.unipi.it}} \and Lorenzo Luperi Baglini\thanks{%
University of Vienna, Faculty of Mathematics, Oskar-Morgenstern-Platz 1,
1090 Vienna, AUSTRIA, e-mail: \texttt{lorenzo.luperi.baglini@univie.ac.at},
supported by grant P25311-N25 of the Austrian Science Fund FWF.}}
\maketitle

\begin{abstract}
Ultrafunctions are a particular class of functions defined on a Non
Archimedean field $\mathbb{R}^{\ast }\supset \mathbb{R}$. They have been
introduced and studied in some previous works (\cite{ultra},\cite{belu2012},%
\cite{belu2013}). In this paper we introduce a modified notion of
ultrafunction and we discuss sistematically the properties that this
modification allows. In particular, we will concentrated on the definition
and the properties of the operators of derivation and integration of
ultrafunctions.

\noindent \textbf{Keywords}. Ultrafunctions, Delta function, distributions,
Non Archimedean Mathematics, Non Standard Analysis.
\end{abstract}

\tableofcontents

\section{Introduction}

In some recent papers the notion of ultrafunction has been introduced and
studied (\cite{ultra}, \cite{belu2012}, \cite{belu2013}). Ultrafunctions are
a particular class of functions defined on a Non Archimedean field $\mathbb{R%
}^{\ast }\supset \mathbb{R}$. We recall that a Non Archimedean field is an
ordered field which contains infinite and infinitesimal numbers. In general,
as we showed in our previous works, when working with ultrafunctions we
associate to any continuous function $f:\mathbb{R}^{N}\rightarrow \mathbb{R}$
an ultrafunction $\widetilde{f}:\left( \mathbb{R}^{\ast }\right)
^{N}\rightarrow \mathbb{R}^{\ast }$ which extends $f;$ more exactly, to any
vector space of functions $V(\Omega )\subseteq L^{2}(\Omega )\cap \mathcal{C}%
(\overline{\Omega })$ we associate a space of ultrafunctions $\widetilde{V}%
(\Omega ).\ $The spaces of ultrafunctions are much larger than the
corrispective spaces of functions, and have much more "compactness": these
two properties ensure that in the spaces of ultrafunctions we can find
solutions to functional equations which do not have any solutions among the
real functions or the distributions.

In \cite{belu2013} we studied the basic properties of ultrafunctions. One
property that is missing, in general, is the "locality": local changes to an
ultrafunction (namely, changing the value of an ultrafunction in a
neighborhood of a point) affects the ultrafunction globally (namely, they
may force to change the values of the ultrafunction in all the points). This
problem is related to the properties of a particular basis of the spaces of
ultrafunctions, called "Delta basis" (see \cite{belu2012}, \cite{belu2013}).
The elements of a Delta basis are called Delta ultrafunctions and, in some
precise sense, they are an analogue of the Delta distributions. More
precisely, given a point $a\in \mathbb{R}^{\ast },$ the Delta ultrafunction
centered in $a$ (denoted by $\delta _{a}(x)$) is the unique ultrafunction
such that, for every ultrafunction $u(x)$, we have%
\begin{equation*}
\int^{\ast }u(x)\delta _{a}(x)dx=u(a).\footnote{$\int^{\ast }:L^{1}(\mathbb{R%
})^{\ast }\rightarrow \mathbb{C}^{\ast }$ is an extension of the integral $%
\int :L^{1}(\mathbb{R})\rightarrow \mathbb{C}.$}
\end{equation*}%
It would be useful for applications to have an orthonormal Delta basis,
namely a Delta basis $\{\delta _{a}(x)\}_{a\in \Sigma }$ such that, for
every $a,b\in \Sigma ,$ $\int^{\ast }\delta _{a}(x)\delta _{b}(x)dx=\delta
_{a,b}$; unfortunately, this seems to be impossible.

The main aim of this paper is to show how to modify the constructions
exposed in \cite{belu2013} (that will be recalled) to avoid such unwanted
issues. We will show how to construct spaces of ultrafunctions that have
"good local properties" and that have Delta bases $\{\delta _{a}(x)\}_{a\in
\Sigma }$ that are "almost orthogonal" where, by saying that a Delta basis
is "almost orthogonal", we mean the following: for every $a,b\in \Sigma ,$
if $|a-b|$ is not infinitesimal$\footnote{%
We recall that an element $x$ of a Non Archimedean superreal ordered field $%
\mathbb{K}\supset \mathbb{R}$ is infinitesimal if $|x|<r$ for every $r\in 
\mathbb{R.}$}$ then $\int^{\ast }\delta _{a}(x)\delta _{b}(x)dx=0.$

We will also discuss a few other properties of ultrafunctions that were
missing in the previous approach but that hold in this new context.

The techniques on which the notion of ultrafunction is based are related to
Non Archimedean Mathematics (NAM) and to Nonstandard Analysis (NSA). In
particular, the most important notion that we use is that of $\Lambda $%
-limit (see \cite{ultra}, \cite{belu2012}, \cite{belu2013}). In this paper
this notion will be considered known; however, for sake of completeness, we
will recall its basic properties in the Appendix.

\subsection{Notations\label{not}}

If $X$ is a set then

\begin{itemize}
\item $\mathcal{P}\left( X\right) $ denotes the power set of $X$ and $%
\mathcal{P}_{fin}\left( X\right) $ denotes the family of finite subsets of $%
X;$

\item $\mathfrak{F}\left( X,Y\right) $ denotes the set of all functions from 
$X$ to $Y$ and $\mathfrak{F}\left( \mathbb{R}^{N}\right) =\mathfrak{F}\left( 
\mathbb{R}^{N},\mathbb{R}\right) .$
\end{itemize}

Let $\Omega $\ be a subset of $\mathbb{R}^{N}$: then

\begin{itemize}
\item $\mathcal{C}\left( \Omega \right) $ denotes the set of continuous
functions defined on $\Omega \subset \mathbb{R}^{N};$

\item $\mathcal{C}_{0}\left( \Omega \right) $ denotes the set of continuous
functions in $\mathcal{C}\left( \Omega \right) $ having compact support in $%
\Omega ;$

\item $\mathcal{C}^{k}\left( \Omega \right) $ denotes the set of functions
defined on $\Omega \subset \mathbb{R}^{N}$ which have continuous derivatives
up to the order $k;$

\item $\mathcal{C}_{0}^{k}\left( \Omega \right) $ denotes the set of
functions in $\mathcal{C}^{k}\left( \Omega \right) \ $having compact support;

\item $\mathcal{C}_{\sharp }^{1}\left( \mathbb{R}\right) $ denotes the set
of functions $f$ of class $\mathcal{C}^{1}\left( \Omega \right) \ $except
than on a discrete set $\Gamma \subset \mathbb{R}$ and such that, for any $%
\gamma \in \Gamma ,$ the limits 
\begin{equation*}
\underset{x\rightarrow \gamma ^{\pm }}{\lim }f(x)
\end{equation*}%
exist and are finite;

\item $\mathcal{D}\left( \Omega \right) $ denotes the set of the infinitely
differentiable functions with compact support defined on $\Omega \subset 
\mathbb{R}^{N};\ \mathcal{D}^{\prime }\left( \Omega \right) $ denotes the
topological dual of $\mathcal{D}\left( \Omega \right) $, namely the set of
distributions on $\Omega ;$

\item if $A\subset \mathbb{R}^{N}$ is a set, then $\chi _{A}$ denotes the
characteristic function of $A;$

\item for any $\xi \in \left( \mathbb{R}^{N}\right) ^{\ast },\rho \in 
\mathbb{R}^{\ast }$, we set $\mathfrak{B}_{\rho }(\xi )=\left\{ x\in \left( 
\mathbb{R}^{N}\right) ^{\ast }:\ |x-\xi |<\rho \right\} $;

\item $\mathfrak{supp}(f)=\overline{\left\{ x\in \left( \mathbb{R}%
^{N}\right) ^{\ast }:f(x)\neq 0\right\} };$

\item $\mathfrak{mon}(x)=\{y\in \left( \mathbb{R}^{N}\right) ^{\ast }:x\sim
y\};$

\item $\mathfrak{gal}(x)=\{y\in \left( \mathbb{R}^{N}\right) ^{\ast }:x\sim
_{f}y\};$

\item $\forall ^{a.e.}$ $x\in X$ means "for almost every $x\in X";$

\item if $a,b\in \mathbb{R}^{\ast },$ then

\begin{itemize}
\item $\left[ a,b\right] _{\mathbb{R}^{\ast }}=\{x\in \mathbb{R}^{\ast
}:a\leq x\leq b\};$

\item $\left( a,b\right) _{\mathbb{R}^{\ast }}=\{x\in \mathbb{R}^{\ast
}:a<x<b\};$

\item $]a,b[\ =\left[ a,b\right] _{\mathbb{R}^{\ast }}\setminus \left( 
\mathfrak{mon}(a)\cup \mathfrak{mon}(b)\right) .$
\end{itemize}
\end{itemize}

\section{Definition of Ultrafunctions}

In this section we introduce a few Desideratum that will be used to
introduce ultrafunctions in a slightly different way with respect to what we
did in \cite{belu2012}, \cite{belu2013}.

Let $\mathfrak{X=}\mathcal{P}_{fin}(\mathfrak{F}\left( \mathbb{R},\mathbb{R}%
\right) ).$ Given $\lambda \in \mathfrak{X,}$ we set $V_{\lambda
}=\{Span\left( f_{j}\right) \ |\ f_{j}\in \lambda \}.$

\begin{definition}
An internal function 
\begin{equation*}
u=\lim_{\lambda \uparrow \Lambda }u_{\lambda }\in \mathfrak{F}\left( \mathbb{%
R}\right) ^{\ast }
\end{equation*}%
is called ultrafunction if, for every $\lambda \in \mathfrak{X,}$ $%
u_{\lambda }\in V_{\lambda }.$ The space of ultrafunctions will be denoted
by $\widetilde{\mathfrak{F}\left( \mathbb{R}\right) }$. With some abuse of
notation we will call ultrafunction also the restriction of $u$ to any
internal subset of $\mathbb{R}^{\ast }$.
\end{definition}

In particular, we have that 
\begin{equation*}
\widetilde{\mathfrak{F}\left( \mathbb{R}\right) }=\lim_{\lambda \uparrow
\Lambda }V_{\lambda },
\end{equation*}%
so, being a $\Lambda $-limit of finite dimensional vector spaces, the vector
space of ultrafunctions has hyperfinite dimension. Moreover, given any
vector space of functions $W\subset \mathfrak{F}\left( \mathbb{R}\right) $,
we can define the space of ultrafunctions generated by $W$ as follows:%
\begin{equation*}
\widetilde{W}=W^{\ast }\cap \widetilde{\mathfrak{F}\left( \mathbb{R}\right) }%
.
\end{equation*}

Let us observe that 
\begin{equation*}
\widetilde{W}=\lim_{\lambda \uparrow \Lambda }W_{\lambda },
\end{equation*}
where for every $\lambda \in \mathfrak{X}$ we pose $W_{\lambda }=V_{\lambda
}\cap W.$

The space of ultrafunctions $\widetilde{\mathfrak{F}\left( \mathbb{R}\right) 
}$ is too large for applications. We want to have a smaller space $\mathfrak{%
U}(\mathbb{R})\subset \widetilde{\mathfrak{F}\left( \mathbb{R}\right) }$
which satisfies suitable properties for applications. We list the main
properties that we would like to obtain for $\mathfrak{U}(\mathbb{R}).$

\begin{desideratum}
\label{a}There is an infinite number $\beta $ such that if $u(x)\in 
\mathfrak{U}(\mathbb{R}),$ then $u(x)=0$ for $|x|>\beta $ and $u(x)\in
L^{\infty }(\mathbb{R})^{\ast }.$
\end{desideratum}

Desideratum \ref{a} states that the ultrafunctions have an uniform compact
support and are bounded in $\mathbb{R}^{\ast }$. From these conditions it
follows that, if $u(x)\in \mathfrak{U}(\mathbb{R}),$ then $u(x)\in L^{p}(%
\mathbb{R})^{\ast }$ for every $p;$ in particular, $u(x)$ is summable and it
is in $L^{2}(\mathbb{R})^{\ast }.$ So $\mathfrak{U}(\mathbb{R})\subset L^{2}(%
\mathbb{R})^{\ast },$ and this allows to give to $\mathfrak{U}(\mathbb{R})$
the euclidean structure and the norm induced by $L^{2}(\mathbb{R})^{\ast }.$

\begin{desideratum}
\label{b}$\mathfrak{U}(\mathbb{R})\subset F_{\sharp }\left( \mathbb{R}%
\right) ^{\ast },$ where%
\begin{equation*}
F_{\sharp }\left( \mathbb{R}\right) =\left\{ u\in L_{loc}^{1}\ \mid \ u(x)=\ 
\underset{\varepsilon \rightarrow 0^{+}}{\lim }\frac{1}{2\varepsilon }%
\int_{x-\varepsilon }^{x+\varepsilon }u(y)\ dy\right\} .
\end{equation*}
\end{desideratum}

This request, which may seem strange at first sight, will allow to associate
to every point $a\in \lbrack -\beta ,\beta ]$ a delta (or Dirac)
ultrafunction centered in $a$, namely an ultrafunction $\delta _{a}(x)$ such
that, for every ultrafunction $u(x)$, we have 
\begin{equation*}
\int^{\ast }u(x)\delta _{a}(x)dx=u(a).
\end{equation*}

\begin{desideratum}
\label{c}If $f\in \mathcal{C}^{1}\left( \mathbb{R}\right) ,\ $and $a,b\in 
\mathbb{R}$, then $\left( f\cdot \chi _{\left[ a,b\right] }\right) ^{\ast
}\in \mathfrak{U}(\mathbb{R}).$
\end{desideratum}

Desideratum \ref{c} is introduced for a few different reasons. First of all,
it is important to have the characteristic functions of intervals even if,
due to Desideratum \ref{b}, we will have to pay attenction in choosing the
right definition of characteristic functions; moreover, it is important to
have the extensions of $\mathcal{C}^{1}$ functions in $\mathfrak{U}(\mathbb{R%
})$ (one could ask this property for continuous function but, as we will
show later, this request seems difficult to obtain if we want also the other
Desideratums that we are presenting here). Finally, we will show that from
Desideratum \ref{c} it follows that the delta functions have compact support
concentrated around their center: in fact we will show that, $\forall a\in 
\mathfrak{gal}(0),$ $\mathfrak{supp}\left( \delta _{a}\right) \subset 
\mathfrak{mon}(a).$

However it would be nice to have the previous property in the following more
general fashion:

\begin{desideratum}
\label{c++}$\forall a\in \left[ -\beta ,\beta \right] ,$ $\mathfrak{supp}%
\left( \delta _{a}\right) \subset \mathfrak{mon}(a).$
\end{desideratum}

Our next desideratum is the following:

\begin{desideratum}
\label{e}There exists a linear map $\widetilde{\left( \cdot \right) }:\left[
L_{loc}^{1}\left( \Omega \right) \right] ^{\ast }\rightarrow \mathfrak{U}(%
\mathbb{R})$ such that $\forall f\in \left[ L_{loc}^{1}\left( \Omega \right) %
\right] ^{\ast },\ \forall v\in \mathfrak{U}(\mathbb{R}),$ we have%
\begin{equation*}
\int^{\ast }fvdx=\int^{\ast }\widetilde{f}vdx.
\end{equation*}
\end{desideratum}

Desideratum \ref{e} substantially states that it is possible to define the
projection of an $\left[ L_{loc}^{1}\left( \Omega \right) \right] ^{\ast }$
function on $\mathfrak{U}(\mathbb{R}).$ In particular, this is useful to
associate canonically an ultrafunction to every function $f\in
L_{loc}^{1}\left( \Omega \right) $ since, in general, it will be false that $%
f^{\ast }\in \mathfrak{U}(\mathbb{R})$ (but when $f^{\ast }\in \mathfrak{U}(%
\mathbb{R})$ by Desideratum \ref{e} we have $f^{\ast }=\widetilde{f}$).

\begin{desideratum}
\label{f}There exists a map $D:\mathfrak{U}(\mathbb{R})\rightarrow \mathfrak{%
U}(\mathbb{R})$ such that

\begin{itemize}
\item $\forall f\in \mathcal{C}^{1}\left( \mathbb{R}\right) ,\ \forall x\in 
\mathbb{R},\ D\widetilde{f}(x)=\widetilde{f^{\prime }}(x);$

\item $\forall u,v\in \mathfrak{U}(\mathbb{R}),\ \int_{-\beta }^{\beta
}Du(x)v(x)dx=-\int_{-\beta }^{\beta }u(x)Dv(x)dx+\left[ u(x)v(x)\right]
_{-\beta }^{\beta };$

\item $D\widetilde{1}=0;$

\item $D\chi _{\left[ a,b\right] }=\delta _{a}-\delta _{b}.$
\end{itemize}
\end{desideratum}

Desideratum \ref{f} simply states that it is possible to define a derivative
on $\mathfrak{U}(\mathbb{R})$ which satisfies a few expected properties.

In the next sections we show how to construct a space that satisfies all the
Desideratum that we presented.

\section{Construction of a canonical space of ultrafunctions}

We want to consider a special subset of ultrafunctions. Let $\beta $ be an
infinite number; we set%
\begin{equation*}
\Gamma =\left\{ \gamma _{0},\gamma _{1},...,\gamma _{\ell }\right\} \subset 
\mathbb{R}^{\ast },
\end{equation*}%
where $\gamma _{0}=-\beta ;\ \gamma _{\ell }=\beta $ and,$\ $for $%
j=0,1,...,\ell -1,$ we require that%
\begin{equation*}
0<\gamma _{j+1}-\gamma _{j}<\eta
\end{equation*}%
where $\eta $ is an infinitesimal number. Moreover, it is useful to assume
that $\mathbb{R}\subseteq \Gamma .$

For $j=0,1,...,\ell -1,$ we set 
\begin{equation*}
\mathbb{I}_{j}:=\left( \gamma _{j},\gamma _{j+1}\right) _{\mathbb{R}^{\ast
}}.
\end{equation*}

For every $a,b\in \Gamma $ we denote by $\chi _{\left[ a,b\right] }(x)$ the
characteristic function of $\left[ a,b\right] $ defined in a slightly
different way:

\begin{equation}
\chi _{\left[ a,b\right] }(x)=\left\{ 
\begin{array}{cc}
1 & \text{if\ }\ x\in \left( a,b\right) \\ 
0 & \text{if\ }\ x\notin \left[ a,b\right] \\ 
\frac{1}{2} & \text{if\ }\ x=a,b;\ a\neq -\beta ;\ b\neq \beta \\ 
1 & \text{if\ }\ x=a=-\beta \\ 
1 & \text{if\ }\ x=b=\beta%
\end{array}%
\right. ;  \label{chi}
\end{equation}%
For every $j=0,1,...,\ell -1,$ we set%
\begin{equation*}
\chi _{j}(x)=\chi _{_{\mathbb{I}_{j}}}(x).
\end{equation*}

The set of functions%
\begin{equation*}
\mathfrak{G}=\left\{ \dsum\limits_{j=0}^{\ell -1}c_{j}\chi _{j}(x)\ |\
c_{j}\in \mathbb{R}^{\ast }\right\}
\end{equation*}%
will be referred to as the set of \textbf{grid functions.}

\begin{definition}
\label{cuf}We denote by $\mathfrak{U}(\mathbb{R})$ the space of
ultrafunctions 
\begin{equation*}
u:\left[ -\beta ,\beta \right] \rightarrow \mathbb{R}^{\ast }
\end{equation*}%
which can be represented as follows:%
\begin{equation*}
u(x)=\dsum\limits_{j=0}^{\ell -1}v_{j}(x)\chi _{j}(x)
\end{equation*}%
where, $\forall j\in J,\ v_{j}(x)\in \widetilde{\mathcal{C}^{1}(\mathbb{R})}$%
. We will refer to $\mathfrak{U}(\mathbb{R})$ as the canonical space of
ultrafunctions.
\end{definition}

\begin{prop}
The elements of $\mathfrak{U}(\mathbb{R})$ are restriction to $\left[ -\beta
,\beta \right] $ of ultrafunctions.
\end{prop}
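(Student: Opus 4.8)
The plan is to extend a given $u\in\mathfrak{U}(\mathbb{R})$ by zero to all of $\mathbb{R}^{\ast}$ and to verify that the resulting internal function is an ultrafunction, i.e.\ belongs to $\widetilde{\mathfrak{F}(\mathbb{R})}$. So, fix $u\in\mathfrak{U}(\mathbb{R})$ together with a representation $u=\sum_{j=0}^{\ell-1}v_{j}\chi_{j}$ with $v_{j}\in\widetilde{\mathcal{C}^{1}(\mathbb{R})}$, where the family $(v_{j})_{j<\ell}$ is internal, and put $\bar{u}(x)=\sum_{j=0}^{\ell-1}v_{j}(x)\chi_{j}(x)$ now regarded as a function on all of $\mathbb{R}^{\ast}$. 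Since each $\chi_{j}$ vanishes off $[\gamma_{j},\gamma_{j+1}]\subseteq[-\beta,\beta]$, we get $\bar{u}(x)=0$ for $|x|>\beta$, while $\bar{u}$ coincides with $u$ on $[-\beta,\beta]$ by Definition \ref{cuf}; moreover $\bar{u}$ is internal. Hence it suffices to show $\bar{u}\in\widetilde{\mathfrak{F}(\mathbb{R})}$.

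Next I would record the following elementary closure property, immediate from the definition of ultrafunction: if $(w_{j})_{j<N}$ is an internal family of ultrafunctions with $N\in\mathbb{N}^{\ast}$, then $\sum_{j<N}w_{j}\in\widetilde{\mathfrak{F}(\mathbb{R})}$. Indeed, choosing a coherent representation $(w_{j})_{j<N}=\lim_{\lambda\uparrow\Lambda}(w_{j,\lambda})_{j<N_{\lambda}}$ with $w_{j,\lambda}\in V_{\lambda}$ for all $j<N_{\lambda}$, the finiteness of each sum and the fact that every $V_{\lambda}$ is a vector space give $\sum_{j<N_{\lambda}}w_{j,\lambda}\in V_{\lambda}$, so $\sum_{j<N}w_{j}=\lim_{\lambda\uparrow\Lambda}\sum_{j<N_{\lambda}}w_{j,\lambda}\in\widetilde{\mathfrak{F}(\mathbb{R})}$. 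Applied to the internal family $(v_{j}\chi_{j})_{j<\ell}$, this reduces the proof to checking that each single product $v_{j}\chi_{j}$ is an ultrafunction.

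For this I would unwind everything at the level of $\Lambda$-limits. Because $\widetilde{\mathcal{C}^{1}(\mathbb{R})}=\lim_{\lambda\uparrow\Lambda}(V_{\lambda}\cap\mathcal{C}^{1}(\mathbb{R}))$ is internal and every $v_{j}$ lies in it, the internal family $(v_{j})_{j<\ell}$ has a representation $(v_{j})_{j<\ell}=\lim_{\lambda\uparrow\Lambda}(v_{j,\lambda})_{j<\ell_{\lambda}}$ with $v_{j,\lambda}\in V_{\lambda}\cap\mathcal{C}^{1}(\mathbb{R})$ for all $j<\ell_{\lambda}$; similarly $\chi_{j}=\lim_{\lambda\uparrow\Lambda}\chi_{j,\lambda}$, where $\chi_{j,\lambda}$ is the characteristic function (with the convention \eqref{chi}) of the $j$-th interval of the finite grid $\Gamma_{\lambda}$ whose $\Lambda$-limit is $\Gamma$. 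The crucial point is that $\Gamma$ and the spaces $V_{\lambda}$ are adapted to one another, in the sense that $V_{\lambda}$ contains $g\,\chi_{j,\lambda}$ for every $g\in V_{\lambda}\cap\mathcal{C}^{1}(\mathbb{R})$ and every grid interval; granting this, $v_{j,\lambda}\chi_{j,\lambda}\in V_{\lambda}$ for all $\lambda$, hence $v_{j}\chi_{j}=\lim_{\lambda\uparrow\Lambda}v_{j,\lambda}\chi_{j,\lambda}\in\widetilde{\mathfrak{F}(\mathbb{R})}$. Together with the previous step this yields $\bar{u}=\sum_{j<\ell}v_{j}\chi_{j}\in\widetilde{\mathfrak{F}(\mathbb{R})}$ and $u=\bar{u}|_{[-\beta,\beta]}$, which is what we want.

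I expect this last point — the compatibility between the grid $\Gamma$ and the finite-dimensional spaces $V_{\lambda}$ — to be the real obstacle, the rest being routine $\Lambda$-limit bookkeeping: one must know that the finite grids $\Gamma_{\lambda}$ underlying $\Gamma$ can be chosen so that each grid characteristic function, and each product of such a function with a $\mathcal{C}^{1}$-element of $V_{\lambda}$, still belongs to $V_{\lambda}$, all the while keeping $\mathbb{R}\subseteq\Gamma$ and $\gamma_{j+1}-\gamma_{j}$ infinitesimal.
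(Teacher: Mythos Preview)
Your approach coincides with the paper's: express $u$ as $\lim_{\lambda\uparrow\Lambda}\sum_{j<\ell_\lambda} v_{j,\lambda}\,\chi_{j,\lambda}$ and conclude that this $\Lambda$-limit is an ultrafunction. The paper's three-line proof simply writes down this limit and asserts ``so it is an ultrafunction'' without your intermediate bookkeeping, and in particular it takes for granted precisely the compatibility between the grids $\Gamma_\lambda$ and the spaces $V_\lambda$ that you isolate as the only nontrivial point --- so the concern you flag at the end is not a gap peculiar to your argument but something the paper leaves implicit as well.
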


\begin{proof} Let $u(x)=\dsum\limits_{j=0}^{\ell }v_{j}(x)\chi _{j}(x),$
let $\ell =\lim_{\lambda \uparrow \Lambda }\ell _{\lambda },$ $\chi
_{j}(x)=\lim_{\lambda \uparrow \Lambda }\chi _{j,\lambda }(x)$ and $%
v_{j}(x)=\lim_{\lambda \uparrow \Lambda }v_{j,\lambda }.$ Then 
\begin{equation*}
u(x)=\lim_{\lambda \uparrow \Lambda }\dsum\limits_{j=0}^{\ell _{\lambda
}-1}v_{j,_{\lambda }}(x)\chi _{j,\lambda }(x),
\end{equation*}%
so it is an ultrafunction. \end{proof}

\begin{prop}
$\mathfrak{U}(\mathbb{R})$ is an hyperfinite dimensional vector space, and $%
\dim (\mathfrak{U}(\mathbb{R}))\leq \ell \cdot \dim \widetilde{\mathcal{C}%
^{1}(\mathbb{R})}.$
\end{prop}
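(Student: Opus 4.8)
The plan is to bound the dimension of $\mathfrak{U}(\mathbb{R})$ by producing a finite (hyperfinite) spanning set of the stated size. Every $u\in\mathfrak{U}(\mathbb{R})$ is by Definition \ref{cuf} of the form $u(x)=\sum_{j=0}^{\ell-1}v_j(x)\chi_j(x)$ with each $v_j\in\widetilde{\mathcal{C}^1(\mathbb{R})}$. Since $\widetilde{\mathcal{C}^1(\mathbb{R})}$ is itself a $\Lambda$-limit of finite dimensional spaces, it is hyperfinite dimensional; fix an (internal) basis $\{e_1,\dots,e_m\}$ of $\widetilde{\mathcal{C}^1(\mathbb{R})}$, where $m=\dim\widetilde{\mathcal{C}^1(\mathbb{R})}$. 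Then writing each $v_j=\sum_{i=1}^m c_{ij}e_i$ with $c_{ij}\in\mathbb{R}^{\ast}$, we get
\begin{equation*}
u(x)=\sum_{j=0}^{\ell-1}\sum_{i=1}^m c_{ij}\,e_i(x)\chi_j(x),
\end{equation*}
so the internal family $\{e_i(x)\chi_j(x)\ :\ 1\le i\le m,\ 0\le j\le\ell-1\}$ spans $\mathfrak{U}(\mathbb{R})$ over $\mathbb{R}^{\ast}$. This family has at most $\ell\cdot m$ elements, which gives $\dim(\mathfrak{U}(\mathbb{R}))\le\ell\cdot\dim\widetilde{\mathcal{C}^1(\mathbb{R})}$ and in particular shows the dimension is hyperfinite.

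To make this rigorous in the $\Lambda$-limit framework, I would carry it out at the level of the approximating spaces and then take the $\Lambda$-limit. Write $\ell=\lim_{\lambda\uparrow\Lambda}\ell_\lambda$, $\chi_j=\lim_{\lambda\uparrow\Lambda}\chi_{j,\lambda}$, and choose for each $\lambda$ a basis $\{e_{1,\lambda},\dots,e_{m_\lambda,\lambda}\}$ of $\mathcal{C}^1(\mathbb{R})\cap V_\lambda$, so that $m=\lim_{\lambda\uparrow\Lambda}m_\lambda$. For each $\lambda$ the space $\{\sum_{j=0}^{\ell_\lambda-1}v_{j,\lambda}\chi_{j,\lambda}\ :\ v_{j,\lambda}\in\mathcal{C}^1(\mathbb{R})\cap V_\lambda\}$ is spanned by the $\le\ell_\lambda\cdot m_\lambda$ functions $e_{i,\lambda}\chi_{j,\lambda}$, hence has dimension $\le\ell_\lambda\cdot m_\lambda$. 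Taking the $\Lambda$-limit and using that the $\Lambda$-limit of the approximating spaces is $\mathfrak{U}(\mathbb{R})$, that dimension passes to the hyperfinite dimension, and that $\le$ is preserved by $\Lambda$-limits of internal quantities, we conclude $\dim(\mathfrak{U}(\mathbb{R}))\le\ell\cdot m$.

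The only subtle point — and the one I would state carefully rather than the spanning argument, which is routine — is the interplay between the $\Lambda$-limit and the notion of dimension: one must check that $\dim(\lim_{\lambda\uparrow\Lambda}W_\lambda)=\lim_{\lambda\uparrow\Lambda}\dim(W_\lambda)$ for the relevant internal spaces, and that the product $\ell\cdot\dim\widetilde{\mathcal{C}^1(\mathbb{R})}$ is correctly computed as $\lim_{\lambda\uparrow\Lambda}(\ell_\lambda\cdot m_\lambda)$. Both facts are standard consequences of the transfer/coherence properties of $\Lambda$-limits recalled in the Appendix, so no genuine obstacle arises; the inequality (rather than equality) in the statement simply reflects that the representation of $u$ need not be unique and that the $e_{i,\lambda}\chi_{j,\lambda}$ may fail to be linearly independent.
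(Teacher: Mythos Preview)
Your proposal is correct and follows essentially the same approach as the paper: both arguments fix a basis $\{e_1,\dots,e_m\}$ of $\widetilde{\mathcal{C}^1(\mathbb{R})}$ and observe that the products $e_i\chi_j$ form a generating set of cardinality $\ell\cdot m$, which immediately gives the dimension bound. Your second and third paragraphs add a level of care about the $\Lambda$-limit and transfer that the paper's proof omits entirely; this extra rigor is welcome but not strictly needed for the argument as the paper presents it.
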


\begin{proof} If $B=\{v_{i}(x)\mid i\leq \dim (\widetilde{\mathcal{C}^{1}(%
\mathbb{R})})\}$ is a basis for $\widetilde{\mathcal{C}^{1}(\mathbb{R})}$,
the set 
\begin{equation*}
B_{V}=\{v_{i}(x)\chi _{j}(x)\mid v_{i}\in B,\text{ }j=0,...,\ell \}
\end{equation*}%
is a set of generators for $\mathfrak{U}(\mathbb{R}),$ and its cardinality
is $\ell \cdot \dim \widetilde{\mathcal{C}^{1}(\mathbb{R})}.$ So $\dim (%
\mathfrak{U}(\mathbb{R}))\leq \ell \cdot \dim \widetilde{\mathcal{C}^{1}(%
\mathbb{R})}.$ \end{proof}

Since $\mathfrak{U}(\mathbb{R})\subset \left[ L^{2}(\mathbb{R})\right]
^{\ast },$ it can be equipped with the following scalar product%
\begin{equation*}
\left( u,v\right) =\int^{\ast }u(x)\overline{v(x)}\ dx,
\end{equation*}%
where $\int^{\ast }$ is the natural extension of the Lebesgue integral
considered as a functional%
\begin{equation*}
\int :L^{1}(\Omega )\rightarrow {\mathbb{C}}.
\end{equation*}%
The norm of a (canonical) ultrafunction will be given by 
\begin{equation*}
\left\Vert u\right\Vert =\left( \int^{\ast }|u(x)|^{2}\ dx\right) ^{\frac{1}{%
2}}.
\end{equation*}

Canonical ultrafunctions have a few interesting properties:

\begin{prop}
\label{nina}The following properties hold:

\begin{enumerate}
\item If $f\in \mathcal{C}^{1}(\mathbb{R})$ then $f^{\ast }\cdot \chi _{%
\left[ -\beta ,\beta \right] _{\mathbb{R}^{\ast }}}\in \mathfrak{U}(\mathbb{R%
});$

\item if $u\in \mathfrak{U}(\mathbb{R})$ and $a,b\in \Gamma $, then $u\cdot
\chi _{\left[ a,b\right] _{\mathbb{R}^{\ast }}}\in \mathfrak{U}(\mathbb{R});$

\item if $u\in \mathfrak{U}(\mathbb{R})$ then for $j=1,...,\ell -1$ the
limits 
\begin{equation*}
\left( \underset{x\rightarrow \gamma _{j}^{\pm }}{\lim }\right) ^{\ast }u(x)
\end{equation*}%
are well defined and we set%
\begin{equation}
u(\gamma _{j}^{+}):=\ \left( \underset{x\rightarrow \gamma _{j}^{+}}{\lim }%
\right) ^{\ast }u(x);\ u(\gamma _{j}^{-}):=\ \left( \underset{x\rightarrow
\gamma _{j}^{-}}{\lim }\right) ^{\ast }u(x);  \label{c+}
\end{equation}

\item if $u\in \mathfrak{U}(\mathbb{R})$ then for $j=0$ the limit 
\begin{equation*}
\left( \underset{x\rightarrow \gamma _{0}^{+}}{\lim }\right) ^{\ast }u(x)
\end{equation*}%
is well defined and for $j=l$ the limit 
\begin{equation*}
\left( \underset{x\rightarrow \gamma _{l}^{-}}{\lim }\right) ^{\ast }u(x)
\end{equation*}%
is well defined.

\item if, for every $j=0,...,\ell $ we set 
\begin{equation*}
V(\mathbb{I}_{j}):=\{u(x)\chi _{j}(x)\mid u(x)\in \widetilde{\mathcal{C}^{1}(%
\mathbb{R})}\},
\end{equation*}%
then, for $k\neq j,$ $V(\mathbb{I}_{j})$ and $V(\mathbb{I}_{k})$ are
orthogonal;

\item $\mathfrak{U}(\mathbb{R})$ can be splitted in orthogonal spaces as
follows:%
\begin{equation*}
\mathfrak{U}(\mathbb{R})=\dbigoplus\limits_{j=0}^{\ell }V(\mathbb{I}_{j}).
\end{equation*}
\end{enumerate}
\end{prop}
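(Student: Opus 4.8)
The plan is to prove the six assertions of Proposition \ref{nina} essentially in the order listed, since each relies on the structure of Definition \ref{cuf} and, later, on the earlier items. Throughout I will work with the representation $u(x)=\sum_{j=0}^{\ell-1}v_j(x)\chi_j(x)$ with $v_j\in\widetilde{\mathcal{C}^1(\mathbb{R})}$, and I will freely pass to the $\Lambda$-limit of the finite-dimensional pieces when convenient.

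For item 1, I would observe that, since $\mathbb{R}\subseteq\Gamma$ and the partition points $\gamma_j$ accumulate at infinitesimal scale, the function $f^\ast\cdot\chi_{[-\beta,\beta]_{\mathbb{R}^\ast}}$ can be written as $\sum_{j=0}^{\ell-1}(f^\ast)(x)\chi_j(x)$ (up to the values at the grid points $\gamma_j$, which are handled by the convention \eqref{chi}); each summand has $v_j=f^\ast\in\widetilde{\mathcal{C}^1(\mathbb{R})}$ because $f\in\mathcal{C}^1(\mathbb{R})$, so the whole sum lies in $\mathfrak{U}(\mathbb{R})$ by definition. Item 2 is similar: if $u=\sum_j v_j\chi_j$ and $a=\gamma_p$, $b=\gamma_q$ are grid points, then $u\cdot\chi_{[a,b]_{\mathbb{R}^\ast}}=\sum_{p\le j<q}v_j\chi_j$ (again modulo the values at the finitely many grid points, which by \eqref{chi} contribute a multiple of the $\chi_j$-structure or vanish), so it is again of the required form. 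The mild subtlety here is bookkeeping at the endpoints $x=\gamma_p,\gamma_q$, where the $1/2$-convention of \eqref{chi} must be reconciled with the one-sided definition of $\chi_j$; I would dispatch this by noting that a single point has no effect after taking $\Lambda$-limits of the relevant $v_{j,\lambda}\chi_{j,\lambda}$, or more carefully by absorbing the half-values into the adjacent $v_j$.

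For items 3 and 4, the point is that on each open interval $\mathbb{I}_j=(\gamma_j,\gamma_{j+1})_{\mathbb{R}^\ast}$ the ultrafunction $u$ agrees with $v_j\in\widetilde{\mathcal{C}^1(\mathbb{R})}$, i.e.\ with the $\Lambda$-limit of genuine $\mathcal{C}^1$ functions; such a function has one-sided limits at every point by transfer of the corresponding statement for $\mathcal{C}^1$ functions (the defining property of $\mathcal{C}^1_\sharp(\mathbb{R})$ or simply continuity of $v_{j,\lambda}$). Hence $\lim_{x\to\gamma_j^+}u(x)=\lim_{x\to\gamma_j^+}v_j(x)=v_j(\gamma_j)$ and $\lim_{x\to\gamma_j^-}u(x)=v_{j-1}(\gamma_j)$ exist as elements of $\mathbb{R}^\ast$; the endpoint cases $j=0$ and $j=\ell$ only require the one relevant side. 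I would phrase this as a direct application of $\Lambda$-limits together with the transfer principle for the one-sided-limit property.

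Items 5 and 6 are the orthogonality and splitting statements, and this is where I expect the only real (though still short) work. For $k\neq j$ the supports $\overline{\mathbb{I}_j}$ and $\overline{\mathbb{I}_k}$ overlap in at most one point, so for $u\chi_j\in V(\mathbb{I}_j)$ and $w\chi_k\in V(\mathbb{I}_k)$ we have $(u\chi_j)(w\chi_k)=0$ pointwise except possibly at one grid point, hence $\int^\ast (u\chi_j)\overline{(w\chi_k)}\,dx=0$ since the integral (as the $\Lambda$-limit of Lebesgue integrals) ignores finite sets — this is item 5. For item 6, the inclusion $\sum_j V(\mathbb{I}_j)\subseteq\mathfrak{U}(\mathbb{R})$ is immediate and the reverse inclusion is exactly the representation in Definition \ref{cuf}; the sum is direct and orthogonal by item 5, giving $\mathfrak{U}(\mathbb{R})=\bigoplus_{j=0}^{\ell}V(\mathbb{I}_j)$. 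The one point to be careful about is whether the representation $u=\sum_j v_j\chi_j$ is unique, i.e.\ whether the $V(\mathbb{I}_j)$ genuinely intersect trivially; this follows because a nonzero element of $V(\mathbb{I}_j)\cap\sum_{k\neq j}V(\mathbb{I}_k)$ would have to be supported both in $\overline{\mathbb{I}_j}$ and in the complementary union, hence supported on a finite set, hence zero as an element of $L^2(\mathbb{R})^\ast$ — so the decomposition is indeed an orthogonal direct sum.
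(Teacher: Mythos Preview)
Your proof is correct and follows essentially the same route as the paper's: item~1 via $f^{\ast}\cdot\chi_{[-\beta,\beta]}=\sum_j f^{\ast}\chi_j$, item~2 by restricting the sum to the grid-intervals between $a$ and $b$, items~3--4 by continuity of each $v_j$ on $\overline{\mathbb{I}_j}$, item~5 by disjointness of supports, and item~6 directly from Definition~\ref{cuf} plus item~5. Your extra bookkeeping at the grid-point endpoints in item~2 and your separate argument for trivial intersection in item~6 are more careful than the paper (which just says ``follows by~(1)'' and ``clear'', respectively), but they are not needed: orthogonality from item~5 already forces the sum to be direct, so the uniqueness discussion can be dropped.
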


\begin{proof} 1)\textbf{\ }If $f\in \mathcal{C}^{1}(\mathbb{R}),$ then $%
f^{\ast }\in \widetilde{\mathcal{C}^{1}(\mathbb{R})},$ and 
\begin{equation*}
f^{\ast }\cdot \chi _{\left[ -\beta ,\beta \right] _{\mathbb{R}^{\ast
}}}=\dsum\limits_{j=0}^{\ell -1}f^{\ast }(x)\chi _{j}(x)\in \mathfrak{U}(%
\mathbb{R}).
\end{equation*}

2) It follows by (1).

3) If $u(x)=\dsum\limits_{j=0}^{\ell -1}u_{j}(x)\chi _{j}(x),$ then 
\begin{equation*}
u(\gamma _{0}^{-})=\left( \underset{x\rightarrow \gamma _{j}^{-}}{\lim }%
\right) ^{\ast }u_{j-1}(x)
\end{equation*}%
and 
\begin{equation*}
u(\gamma _{j}^{+})=\left( \underset{x\rightarrow \gamma _{j}^{+}}{\lim }%
\right) ^{\ast }u_{j}(x)
\end{equation*}%
and these limits exist because $u_{j-1},u_{j\text{ }}$ are continuous on $%
\overline{\mathbb{I}_{j-1}},\overline{\mathbb{I}_{j}}$ respectively.

4) The same as in 1).

5) This is immediate since, if $j\neq k$, if $u\in V(\mathbb{I}_{j})$ and $%
v\in V(\mathbb{I}_{k})$ then the supports of $u$ and $v$ are disjoint.

6) Having proved 3), it remains only to prove that $\dbigoplus%
\limits_{j=0}^{\ell }V(\mathbb{I}_{j})$ generates all $\mathfrak{U}(\mathbb{R%
})$, and this is clear because, if $u(x)=\dsum\limits_{j=0}^{\ell
}u_{j}(x)\chi _{j}(x)$ then, for every $j=0,...,\ell -1$, $u_{j}(x)\chi
_{j}(x)\in V(\mathbb{I}_{j}).$ \end{proof}

\begin{definition}
\label{split}A basis $\left\{ e_{j,k}:j=0,...,\ell -1,\
k=1,...,s_{j}\right\} $ for $\mathfrak{U}(\mathbb{R})$ is called \textbf{%
splitted} \textbf{basis} if, for every $j=0,...,\ell ,$ $\left\{
e_{j,k}\right\} _{k=1}^{s_{j}}$ is a basis for $V(\mathbb{I}_{j}).$
\end{definition}

\section{Delta and Sigma basis}

Following the approach presented in \cite{belu2013}, in this section we
introduce two particular bases for $\mathfrak{U}(\mathbb{R})$ and we study
their main properties. We start by defining the \textit{Delta ultrafunctions}%
. In order to do this, it is useful to observe that the value of an
ultrafunction $u$ for $\gamma _{j},$ $j=1,...,\ell -1,$ can be defined as
follows: 
\begin{equation*}
u(\gamma _{j})=\frac{u(\gamma _{j}^{+})+u(\gamma _{j}^{-})}{2}
\end{equation*}%
where $u(x^{+}),u(x^{-})$ are defined by (\ref{c+}). The fact that this
definition makes sense follows by points 3) and 4) in Proposition \ref{nina}%
. Moreover we pose 
\begin{equation*}
u(\gamma _{0})=u(\mathbb{-}\beta )=u^{+}(\mathbb{-}\beta );\ u(\gamma _{\ell
})=u(\beta )=u^{-}(\beta ).
\end{equation*}%
These observations are relevant in the following definition:

\begin{definition}
\label{dede}Given a number $q\in \mathbb{[-}\beta ,\beta ]$ we denote by $%
\delta _{q}(x)$ an ultrafunction in $\mathfrak{U}(\mathbb{R})$ such that 
\begin{equation}
\forall v\in \mathfrak{U}(\mathbb{R}),\ \int^{\ast }v(x)\delta
_{q}(x)dx=v(q).  \label{deltafunction}
\end{equation}%
$\delta _{q}(x)$ is called the Delta (or Dirac) ultrafunction concentrated
in $q$.
\end{definition}

Let us see the main properties of the Delta ultrafunctions:

\begin{theorem}
\label{delta} We have the following properties:

\begin{enumerate}
\item For every $q\in \mathbb{[-}\beta ,\beta ]$ there exists an unique
Delta ultrafunction concentrated in $q;$

\item for every $a,\ b\in \mathbb{[-}\beta ,\beta ]\ \delta _{a}(b)=\delta
_{b}(a);$

\item $\left\Vert \delta _{q}\right\Vert ^{2}=\delta _{q}(q).$
\end{enumerate}
\end{theorem}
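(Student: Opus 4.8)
The plan is to use the Hilbert space structure of $\mathfrak{U}(\mathbb{R})$ provided by the scalar product $(u,v)=\int^{\ast}u\overline{v}\,dx$, together with the fact (from the two propositions preceding Definition \ref{split}) that $\mathfrak{U}(\mathbb{R})$ is a hyperfinite dimensional inner product space.

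For part (1), the key observation is that for each fixed $q$, the evaluation functional $u\mapsto u(q)$ is a linear functional on $\mathfrak{U}(\mathbb{R})$. Indeed, every $u\in\mathfrak{U}(\mathbb{R})$ is represented as $u(x)=\sum_{j=0}^{\ell-1}v_j(x)\chi_j(x)$ with $v_j\in\widetilde{\mathcal{C}^1(\mathbb{R})}$, and the value $u(q)$ — interpreted via the one-sided-limit conventions recalled just before the theorem, i.e. $u(\gamma_j)=\tfrac12(u(\gamma_j^+)+u(\gamma_j^-))$ at grid points and $u^+(-\beta)$, $u^-(\beta)$ at the endpoints — depends linearly on $u$. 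Since $\mathfrak{U}(\mathbb{R})$ is hyperfinite dimensional, by (the $\Lambda$-transform of) the Riesz representation theorem there is a unique element $\delta_q\in\mathfrak{U}(\mathbb{R})$ with $(v,\delta_q)=v(q)$ for all $v$, i.e. $\int^{\ast}v(x)\delta_q(x)\,dx=v(q)$. (Here I should note that the $\delta_q$ are real-valued, so complex conjugation is immaterial; one works internally, taking the $\Lambda$-limit of the finite dimensional Riesz representatives.) Uniqueness is the standard argument: if $\delta_q$ and $\delta_q'$ both satisfy \eqref{deltafunction}, then $\int^{\ast}v(\delta_q-\delta_q')\,dx=0$ for all $v\in\mathfrak{U}(\mathbb{R})$, and taking $v=\delta_q-\delta_q'$ forces $\|\delta_q-\delta_q'\|=0$, hence $\delta_q=\delta_q'$.

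Parts (2) and (3) then follow quickly by plugging Delta ultrafunctions into \eqref{deltafunction}. For (2), since $\delta_b\in\mathfrak{U}(\mathbb{R})$ we may take $v=\delta_b$ in the defining property of $\delta_a$, obtaining $\int^{\ast}\delta_b(x)\delta_a(x)\,dx=\delta_b(a)$; symmetrically, taking $v=\delta_a$ in the defining property of $\delta_b$ gives $\int^{\ast}\delta_a(x)\delta_b(x)\,dx=\delta_a(b)$. Since the integrand $\delta_a(x)\delta_b(x)$ is the same in both expressions, $\delta_a(b)=\delta_b(a)$. For (3), take $v=\delta_q$ in the defining property of $\delta_q$ itself: $\|\delta_q\|^2=\int^{\ast}|\delta_q(x)|^2\,dx=\int^{\ast}\delta_q(x)\delta_q(x)\,dx=\delta_q(q)$.

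The one genuine point requiring care — and the main obstacle — is justifying in part (1) that $u\mapsto u(q)$ really is a well-defined linear functional on $\mathfrak{U}(\mathbb{R})$, for \emph{every} $q\in[-\beta,\beta]$, not just for $q$ in the grid $\Gamma$. For $q\in\Gamma$ this is exactly the content of Proposition \ref{nina} parts (3)--(4) plus the averaging convention; for $q$ in the interior of some interval $\mathbb{I}_j$ one uses that on $\mathbb{I}_j$ the ultrafunction agrees with $v_j\in\widetilde{\mathcal{C}^1(\mathbb{R})}$, whose pointwise values are internally well-defined, and the assignment $u\mapsto v_j$ (hence $u\mapsto v_j(q)$) is linear. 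Once well-definedness and linearity are in hand, the Riesz representation step is purely formal via $\Lambda$-limits, and the rest is immediate.
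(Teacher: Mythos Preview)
Your proof is correct; parts (2) and (3) are identical to the paper's. For part (1), however, you take a genuinely different route. The paper does not invoke Riesz representation abstractly; instead it fixes an orthogonal splitted basis $\{e_{j,k}\}$ of $\mathfrak{U}(\mathbb{R})=\bigoplus_j V(\mathbb{I}_j)$ and writes down $\delta_q$ explicitly: for $q\in\mathbb{I}_j$ one sets $\delta_q(x)=\sum_k e_{j,k}(q)e_{j,k}(x)$, with the obvious one-sided and averaged variants at grid points, and then checks the defining property by direct computation on each orthogonal summand. Your Riesz argument is cleaner and avoids choosing a basis, but the paper's explicit construction buys something you do not get for free: it shows immediately that $\delta_q\in V(\mathbb{I}_j)$ (resp.\ $V(\mathbb{I}_{j-1})\oplus V(\mathbb{I}_j)$ at grid points), i.e.\ that $\mathfrak{supp}(\delta_q)\subset\overline{\mathbb{I}_j}$. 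This locality is precisely what is exploited in the subsequent Theorem~\ref{tbase}, so in the paper's development the explicit formula is doing real work beyond mere existence. The uniqueness arguments also differ slightly: you test against $v=\delta_q-\delta_q'$ to get $\|\delta_q-\delta_q'\|=0$, whereas the paper tests against $\delta_y$ for each $y$ to conclude pointwise equality; both are fine.
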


\begin{proof} 1) Let $\left\{ e_{j,k}:j=0,...,\ell -1,\
k=1,...,s_{j}\right\} $ be an orthogonal splitted basis of $\mathfrak{U}(%
\mathbb{R})$ (see Def. \ref{split}). If $q\in \mathbb{I}_{j}$ we pose%
\begin{equation*}
\delta _{q}(x)=\sum_{k=1}^{s_{j}}e_{j,k}(q)e_{j,k}(x).
\end{equation*}

For every $i\neq j$, for every $v\in V\left( \mathbb{I}_{i}\right) $ we have 
$\int^{\ast }v(x)\delta _{q}(x)dx=0=v(q).$ If $v\in V\left( \mathbb{I}%
_{j}\right) ,$ $v=\sum_{k=1}^{s_{j}}v_{k}e_{j,k}(x)$ we have 
\begin{eqnarray*}
\int^{\ast }v(x)\delta _{q}(x)dx &=& \\
\int^{\ast }\left( \sum_{k=1}^{s_{j}}e_{j,k}(q)e_{j,k}(x)\right) \left(
\sum_{k=1}^{s_{j}}v_{k}e_{j,k}(x)\right) dx &=&\sum_{k=1}^{s_{j}}\int^{\ast
}e_{j,k}(q)e_{j,k}(x)v_{k}e_{j,k}(x)= \\
\sum_{k=1}^{s_{j}}e_{j,k}(q)v_{k} &=&v(q).
\end{eqnarray*}

If $q=\gamma _{0}$ we pose 
\begin{equation*}
\delta _{q}(x)=\sum_{k=1}^{s_{0}}e_{j,k}^{+}(q)e_{j,k}(x)
\end{equation*}

and if $q=\gamma _{\ell \text{ }}$we pose 
\begin{equation*}
\delta _{q}(x)=\sum_{k=1}^{s_{\ell -1}}e_{j,k}^{-}(q)e_{j,k}(x).
\end{equation*}

The verification that these definitions are well posed is equal to the one
carried out for $q\in \mathbb{I}_{j}.$

If $q=\gamma _{j},$ $j\neq 0,\ell $ we set%
\begin{equation*}
\delta _{q}(x)=\frac{1}{2}\left(
\sum_{k=1}^{s_{j-1}}e_{j-1,k}^{-}(q)e_{j-1,k}(x)+%
\sum_{k=1}^{s_{j}}e_{j,k}^{+}(q)e_{j,k}(x)\right) .
\end{equation*}%
Then%
\begin{equation*}
\int^{\ast }v(x)\delta _{q}(x)dx=
\end{equation*}%
\begin{eqnarray*}
\frac{1}{2}\left( \int_{\left[ \gamma _{j-1},\gamma _{j}\right] }^{\ast
}v(x)\left( \sum_{k=1}^{s_{j}}e_{j-1,k}^{-}(q)e_{j-1,k}(x)\right) dx+\int_{%
\left[ \gamma _{j},\gamma _{j+1}\right] }^{\ast }v(x)\left(
\sum_{k=1}^{s_{j}}e_{j,k}^{+}(q)e_{j,k}(x)\right) dx\right) &=& \\
\frac{1}{2}\left( \int_{\left[ \gamma _{j-1},\gamma _{j}\right] }^{\ast
}v^{-}(x)\left( \sum_{k=1}^{s_{j}}e_{j-1,k}^{-}(q)e_{j-1,k}(x)\right)
dx+\int_{\left[ \gamma _{j},\gamma _{j+1}\right] }^{\ast }v^{+}(x)\left(
\sum_{k=1}^{s_{j}}e_{j,k}^{+}(q)e_{j,k}(x)\right) dx\right) &=&
\end{eqnarray*}%
\begin{equation*}
\frac{1}{2}\left[ v^{-}(\gamma _{j})+v^{+}(\gamma _{j})\right] =v(\gamma
_{j}).
\end{equation*}

The Delta function in $q$ is unique: if $f_{q}(x)$ is another Delta
ultrafunction centered in $q$ then for every $y\in \mathbb{[-}\beta ,\beta ]$
we have:%
\begin{equation*}
\delta _{q}(y)-f_{q}(y)=\int^{\ast }(\delta _{q}(x)-f_{q}(x))\delta
_{y}(x)dx=\delta _{y}(q)-\delta _{y}(q)=0
\end{equation*}

and hence $\delta _{q}(y)=f_{q}(y)$ for every $y\in \mathbb{(-}\beta ,\beta
).$

2.$\ \delta _{a}\left( b\right) =\int^{\ast }\delta _{a}(x)\delta _{b}(x)\
dx=\delta _{b}\left( a\right) .$

3. $\left\Vert \delta _{q}\right\Vert ^{2}=\int^{\ast }\delta _{q}(x)\delta
_{q}(x)=\delta _{q}(q)$. \end{proof}

Let us observe that, as the previous proof shows, in every point $\gamma _{j%
\text{ }}$ of the grid $\Gamma ,$ with the exceptions of $-\beta ,\beta ,$
it is possible to define three delta functions centered in $\gamma _{j}$,
namely $\delta _{\gamma _{j}}^{-}(x),\delta _{\gamma _{j}}^{+}(x)$ and $%
\delta _{\gamma _{j}}(x),$ which satisfy the following properties: for every 
$\forall v\in \mathfrak{U}(\mathbb{R}),\ $we have%
\begin{eqnarray}
\int^{\ast }v(x)\delta _{\gamma _{j}}^{-}(x)dx &=&v^{-}(\gamma _{j});  \notag
\\
\int^{\ast }v(x)\delta _{\gamma _{j}}^{+}(x)dx &=&v^{+}(\gamma _{j});
\label{yole} \\
\int^{\ast }v(x)\delta _{\gamma _{j}}(x)dx &=&v(\gamma _{j}).  \notag
\end{eqnarray}

Moreover, it is immediate to prove that the conditions in $\left( \ref{yole}%
\right) $ charatecterize uniquely the functions $\delta _{\gamma
_{j}}^{-}(x),$ $\delta _{\gamma _{j}}^{+}(x)$ and $\delta _{\gamma _{j}}(x).$
So we will consider $\left( \ref{yole}\right) $ as a definition for $\delta
_{\gamma _{j}}^{-}(x),$ $\delta _{\gamma _{j}}^{+}(x)$ and $\delta _{\gamma
_{j}}(x).$

\begin{definition}
A Delta-basis $\left\{ \delta _{a}(x)\right\} _{a\in \Sigma }$ $(\Sigma
\subset \mathbb{[-}\beta ,\beta ])$ is a basis for $\mathfrak{U}(\mathbb{R})$
whose elements are Delta ultrafunctions. Its dual basis $\left\{ \sigma
_{a}(x)\right\} _{a\in \Sigma }$ is called Sigma-basis. We recall that, by
definition of dual basis, for every $a,b\in \Sigma $ the equation%
\begin{equation}
\int^{\ast }\delta _{a}(x)\sigma _{b}(x)dx=\delta _{ab}  \label{mimma}
\end{equation}%
holds. A set $A\subset \mathbb{[-}\beta ,\beta ]$ is called set of
independent points if $\left\{ \delta _{a}(x)\right\} _{a\in A}$ is a basis
\end{definition}

The existence of a Delta-basis is an immediate consequence of the following
fact:

\begin{remark}
The set $\left\{ \delta _{a}(x)|a\in \lbrack \mathbb{-}\beta ,\beta
]\right\} $ generates all $\mathfrak{U}(\mathbb{R}).$ In fact, let $G(\Omega
)$ be the vector space generated by the set $\left\{ \delta _{a}(x)\ |\ a\in
\lbrack \mathbb{-}\beta ,\beta ]\right\} $ and suppose that $G(\Omega )$ is
properly included in $\mathfrak{U}(\mathbb{R}).$ Then the orthogonal $%
G(\Omega )^{\perp }$ of $G(\Omega )$ in $\mathfrak{U}(\mathbb{R})$ contains
a function $f\neq 0.$ But, since $f\in $ $G(\Omega )^{\perp },$ for every $%
a\in \mathbb{[-}\beta ,\beta ]$ we have 
\begin{equation*}
f(a)=\int^{\ast }f(x)\delta _{a}(x)dx=0,
\end{equation*}%
so $f_{\upharpoonleft _{\lbrack \mathbb{-}\beta ,\beta ]}}=0$ and this is
absurd. Thus the set $\left\{ \delta _{a}(x)\ |\ a\in \mathbb{(-}\beta
,\beta )\right\} $ generates $\mathfrak{U}(\mathbb{R}),$ hence it contains a
basis.
\end{remark}

Let us see some properties of Delta- and Sigma-bases (which, in this new
context, are slightly different from the one presented in \cite{belu2013}):

\begin{theorem}
\label{tbase}A Delta-basis $\left\{ \delta _{q}(x)\right\} _{q\in \Sigma }$
and its dual basis $\left\{ \sigma _{q}(x)\right\} _{q\in \Sigma }$ satisfy
the following properties:

\begin{enumerate}
\item if $u\in \mathfrak{U}(\mathbb{R})$ then%
\begin{equation*}
u(x)=\sum_{q\in \Sigma }\left( \int^{\ast }\sigma _{q}(\xi )u(\xi )d\xi
\right) \delta _{q}(x);
\end{equation*}

\item if $u\in \mathfrak{U}(\mathbb{R})$ then%
\begin{equation}
u(x)=\sum_{q\in \Sigma }u(q)\sigma _{q}(x);  \label{brava+}
\end{equation}

\item if two ultrafunctions $u$ and $v$ coincide on a set of independent
points then they are equal;

\item if $\Sigma $ is a set of independent points and $a,b\in \Sigma $ then $%
\sigma _{a}(b)=\delta _{ab};$

\item for every $q\in \lbrack -\beta ,\beta ],$ $\sigma _{q}(x)$ is well
defined;

\item for every $q\in \mathbb{[-}\beta ,\beta ]$\ if $q\in \mathbb{I}_{j%
\text{ }}$ then $\mathfrak{supp}(\delta _{q}(x))\subset \overline{\mathbb{I}%
_{j}}$ and $\mathfrak{supp}(\sigma _{q}(x))\subset \overline{\mathbb{I}_{j}}%
; $

\item for every $\gamma _{j}\in \Gamma \setminus \{\gamma _{0},\gamma _{\ell
}\},$\ $\mathfrak{supp}(\delta _{\gamma _{j}}(x))\subset \overline{\mathbb{I}%
_{j-1}\cup \mathbb{I}_{j}}$ and $\mathfrak{supp}(\sigma _{\gamma
_{j}}(x))\subset \overline{\mathbb{I}_{j-1}\cup \mathbb{I}_{j}};$

\item $\mathfrak{supp}(\delta _{\gamma _{0}}(x))\subset \overline{\mathbb{I}%
_{0}}$, $\mathfrak{supp}(\sigma _{\gamma _{0}}(x))\subset \overline{\mathbb{I%
}_{0}},$ $\mathfrak{supp}(\delta _{\gamma _{\ell }}(x))\subset \overline{%
\mathbb{I}_{\ell }}$ and $\mathfrak{supp}(\sigma _{\gamma _{\ell
}}(x))\subset \overline{\mathbb{I}_{\ell -1}};$

\item for every $q\in \left[ -\beta ,\beta \right] ,$\ $\mathfrak{supp}%
(\delta _{q}(x))\subset \mathfrak{mon}\left( q\right) $ and $\mathfrak{supp}%
(\sigma _{q}(x))\subset \mathfrak{mon}\left( q\right) .$
\end{enumerate}
\end{theorem}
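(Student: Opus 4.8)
The plan is to establish the nine assertions in roughly the order listed, exploiting the orthogonal splitting $\mathfrak{U}(\mathbb{R})=\bigoplus_{j=0}^{\ell}V(\mathbb{I}_{j})$ from Proposition \ref{nina} together with the defining property $\int^{\ast}v(x)\delta_{q}(x)dx=v(q)$ and the dual-basis relation $\int^{\ast}\delta_{a}(x)\sigma_{b}(x)dx=\delta_{ab}$. For (1), expand $u$ in the Delta-basis as $u=\sum_{q\in\Sigma}c_{q}\delta_{q}$ and pair both sides with $\sigma_{p}$: by \eqref{mimma} this isolates $c_{p}=\int^{\ast}\sigma_{p}(\xi)u(\xi)d\xi$. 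For (2), expand $u$ instead in the Sigma-basis, $u=\sum_{q}d_{q}\sigma_{q}$, and pair with $\delta_{p}$; since $\int^{\ast}\delta_{p}\sigma_{q}=\delta_{pq}$ we get $d_{p}=\int^{\ast}u(x)\delta_{p}(x)dx=u(p)$ by Definition \ref{dede}. Assertion (3) is then immediate: if $u(q)=v(q)$ for all $q$ in a set of independent points $\Sigma$, then by \eqref{brava+} both $u$ and $v$ have the same expansion $\sum_{q\in\Sigma}u(q)\sigma_{q}(x)$, hence coincide. For (4), evaluate \eqref{brava+} applied to $u=\sigma_{a}$ at the point $b\in\Sigma$: we get $\sigma_{a}(b)=\sum_{q\in\Sigma}\sigma_{a}(q)\sigma_{q}(b)$; alternatively and more directly, $\sigma_{a}(b)=\int^{\ast}\sigma_{a}(x)\delta_{b}(x)dx=\delta_{ab}$ using \eqref{mimma} and point 2 of Theorem \ref{delta} to move the evaluation onto $\delta_b$. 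Point (5) follows because every $q\in[-\beta,\beta]$ lies in some $\mathbb{I}_{j}$ or equals some $\gamma_{j}$, and in each case $\delta_{q}$ (resp. $\delta_{\gamma_j}^\pm$) was exhibited explicitly in the proof of Theorem \ref{delta} as an element of a finite-dimensional nondegenerate subspace, so a dual element $\sigma_q$ exists there.

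For the support statements (6)--(8), the key structural fact is that $\delta_{q}$ for $q\in\mathbb{I}_{j}$ lives inside $V(\mathbb{I}_{j})$ — this is exactly how it was constructed in the proof of Theorem \ref{delta}, as $\sum_{k}e_{j,k}(q)e_{j,k}(x)$ with all basis vectors supported in $\overline{\mathbb{I}_{j}}$. I would argue that the local Delta function $\delta_{q}|_{V(\mathbb{I}_j)}$ defined intrinsically on the subspace $V(\mathbb{I}_{j})$ (which, being finite-dimensional with nondegenerate $L^2$ form, has its own reproducing element) must coincide with the global $\delta_{q}$: indeed the global $\delta_q$ already lies in $V(\mathbb{I}_j)$ and reproduces point values of all $v\in V(\mathbb{I}_j)$, and such an element is unique. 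Since $V(\mathbb{I}_{j})=\{u\chi_j\mid u\in\widetilde{\mathcal{C}^1(\mathbb{R})}\}$, every nonzero element vanishes outside $\overline{\mathbb{I}_{j}}$, giving $\mathfrak{supp}(\delta_q)\subset\overline{\mathbb{I}_j}$; the same reasoning applies to $\sigma_q$, which is the reproducing-type element dual to the restricted Delta-basis inside $V(\mathbb{I}_j)$. For a grid point $\gamma_{j}$ with $0<j<\ell$, the explicit formula $\delta_{\gamma_j}=\frac12(\sum_k e_{j-1,k}^{-}(\gamma_j)e_{j-1,k}+\sum_k e_{j,k}^{+}(\gamma_j)e_{j,k})$ shows it lives in $V(\mathbb{I}_{j-1})\oplus V(\mathbb{I}_{j})$, hence its support is in $\overline{\mathbb{I}_{j-1}\cup\mathbb{I}_{j}}$; the endpoint cases $\gamma_0=-\beta$ and $\gamma_\ell=\beta$ involve only one adjacent cell, as recorded in (8). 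The $\sigma$-support claims follow because $\sigma_{\gamma_j}$, being dual to a Delta-basis whose other elements can be taken to lie in neighboring cells, is itself forced into $V(\mathbb{I}_{j-1})\oplus V(\mathbb{I}_{j})$.

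Finally, assertion (9) upgrades (6)--(8) using the metric smallness of the grid: since $0<\gamma_{j+1}-\gamma_{j}<\eta$ with $\eta$ infinitesimal, each $\overline{\mathbb{I}_{j}}$ has infinitesimal diameter, and for $q\in\mathbb{I}_{j}$ the point $q$ together with all of $\overline{\mathbb{I}_{j}}$ (or $\overline{\mathbb{I}_{j-1}\cup\mathbb{I}_{j}}$ when $q=\gamma_j$) is contained in $\mathfrak{mon}(q)$, because any two points within infinitesimal distance are infinitely close. Combining this with (6)--(8) gives $\mathfrak{supp}(\delta_q)\subset\mathfrak{mon}(q)$ and $\mathfrak{supp}(\sigma_q)\subset\mathfrak{mon}(q)$ for every $q\in[-\beta,\beta]$.

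I expect the main obstacle to be item (6), specifically the $\sigma$-support half and the need to justify rigorously that the \emph{global} $\delta_q$ and $\sigma_q$ agree with the ones built \emph{internally} in the subspace $V(\mathbb{I}_j)$ — i.e., that confining the construction to a single cell is legitimate. The clean way around this is the orthogonality of the splitting: because $\int^{\ast}$ pairs $V(\mathbb{I}_j)$ trivially with $V(\mathbb{I}_k)$ for $k\neq j$, any element reproducing point values (or any dual element) is not constrained at all by the other summands, so it may — and by uniqueness must — be chosen within the single relevant cell (or pair of cells at a grid point). Once that localization principle is stated once, all of (6)--(8) follow uniformly, and (9) is then just the observation that a cell of infinitesimal width sits inside a monad.
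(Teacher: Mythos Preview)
Your argument is essentially the paper's: items (1)--(4) and (6)--(9) match almost verbatim, using the dual-basis relation \eqref{mimma}, the defining property of $\delta_q$, the explicit cell-by-cell construction from Theorem~\ref{delta}, and the orthogonal splitting $\mathfrak{U}(\mathbb{R})=\bigoplus_j V(\mathbb{I}_j)$ to localize supports, with (9) then reduced to the infinitesimal width of each cell.

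The one place you diverge is (5). The paper reads ``well defined'' as \emph{independence of the choice of Delta-basis containing $q$}: given two Delta-bases $\Sigma,\Sigma'$ both containing $q$, it applies (2) to $\sigma_q'$ in the basis $\{\sigma_a\}_{a\in\Sigma}$ and then invokes (4) to conclude $\sigma_q'=\sigma_q$. Your sketch instead argues \emph{existence} of a dual element in the local subspace $V(\mathbb{I}_j)$, which does not by itself show that two different global Delta-bases yield the same $\sigma_q$. Your localization principle from (6)--(8) can certainly be pressed into service here --- it gives a canonical, basis-free $\sigma_q$ inside the relevant cell(s), and uniqueness of the dual basis then forces any global $\sigma_q$ to agree with it --- but as written you have not closed that loop for (5). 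Either add the paper's short (2)+(4) argument, or state explicitly that the locally-constructed $\sigma_q$ coincides with the $\sigma_q$ coming from any global Delta-basis containing $q$.
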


\begin{proof} 1) It is an immediate consequence of the definition of dual
basis.

2) Since $\left\{ \delta _{q}(x)\right\} _{q\in \Sigma }$ is the dual basis
of $\left\{ \sigma _{q}(x)\right\} _{q\in \Sigma }$ we have that%
\begin{equation*}
u(x)=\sum_{q\in \Sigma }\left( \int \delta _{q}(\xi )u(\xi )d\xi \right)
\sigma _{q}(x)=\sum_{q\in \Sigma }u(q)\sigma _{q}(x).
\end{equation*}

3) It follows directly from the previous point.

4) If follows directly by equation (\ref{mimma}).

5) Given any point $q\in (-\beta ,\beta )$ clearly there is a Delta-basis $%
\left\{ \delta _{a}(x)\right\} _{a\in \Sigma }$ with $q\in \Sigma .$ Then $%
\sigma _{q}(x)$ can be defined by mean of the basis $\left\{ \delta
_{a}(x)\right\} _{a\in \Sigma }.$ We have to prove that, given another Delta
basis $\left\{ \delta _{a}(x)\right\} _{a\in \Sigma ^{\prime }}$ with $q\in
\Sigma ^{\prime },$ the corresponding $\sigma _{q}^{\prime }(x)$ is equal to 
$\sigma _{q}(x).$ Using (2), with $u(x)=\sigma _{q}^{\prime }(x),$ we have
that%
\begin{equation*}
\sigma _{q}^{\prime }(x)=\sum_{a\in \Sigma }\sigma _{q}^{\prime }(a)\sigma
_{a}(x).
\end{equation*}%
Then, by (4), it follows that $\sigma _{q}^{\prime }(x)=\sigma _{q}(x).$

6) As we proved in Theorem \ref{delta}, \ if $q\in \mathbb{I}_{j}$ then $%
\delta _{q}$ is an element of $V(\mathbb{I}_{j}),$ so $\mathfrak{supp}%
(\delta _{q}(x))\subset \overline{\mathbb{I}_{j}}.$ Now $\delta _{q}\in V(%
\mathbb{I}_{j}),$ so there is a corrispective function $\sigma _{q}\in V(%
\mathbb{I}_{j})$ which is the sigma function centered in $q.$ If we extend
this function to $[-\beta ,\beta ]$ by posing $\sigma _{q}(x)=0$ for $%
x\notin \mathbb{I}_{j}$ we obtain, by uniqueness, exactly the sigma function
centered in $q$ in $\mathfrak{U}(\mathbb{R})$. And $\mathfrak{supp}(\sigma
_{q}(x))\subset \overline{\mathbb{I}_{j}}.$

7) In Theorem \ref{delta} we proved that $\delta _{\gamma _{j}}$ is an
element in $V(\mathbb{I}_{j})\cup V(\mathbb{I}_{j+1}),$ so $\mathfrak{supp}%
(\delta _{\gamma _{j}}(x))\subset \overline{\mathbb{I}_{j-1}\cup \mathbb{I}%
_{j}}.$ Now we can consider its corrispective sigma function $\sigma
_{\gamma _{j}}\in V(\mathbb{I}_{j})\cup V(\mathbb{I}_{j+1}).$ If we extend
this function to $\mathfrak{U}(\mathbb{R})$ by posing $\sigma _{\gamma
_{j}}(x)=0$ for $x\notin \mathbb{I}_{j}\cup \mathbb{I}_{j+1}$, we obtain the
sigma function in $\mathfrak{U}(\mathbb{R})$ centered in $\gamma _{j}.$ And,
by construction, $\mathfrak{supp}(\sigma _{\gamma _{j}}(x))\subset \overline{%
\mathbb{I}_{j}\cup \mathbb{I}_{j+1}}.$

8) In Theorem \ref{delta} we proved that $\delta _{0}$ is an element in $V(%
\mathbb{I}_{0})$ and $\delta _{\ell }$ is in $V(\mathbb{I}_{\ell -1}),$ and
that the same property holds for the corrispective $\sigma $ functions can
be proved as in point (6) of this Theorem. So $\mathfrak{supp}(\delta
_{\gamma _{0}}(x))\subset \overline{\mathbb{I}_{0}}$, $\mathfrak{supp}%
(\sigma _{\gamma _{0}}(x))\subset \overline{\mathbb{I}_{0}},$ $\mathfrak{supp%
}(\delta _{\gamma _{\ell +1}}(x))\subset \overline{\mathbb{I}_{\ell }}$ and $%
\mathfrak{supp}(\sigma _{\gamma _{\ell }}(x))\subset \overline{\mathbb{I}}%
_{\ell -1}.$

9) It is a straightforward consequence of the points 6 and 7, since for
every $j\in J$ we have $\mathbb{I}_{j}\cup \mathbb{I}_{j+1}\subset \mathfrak{%
mon}\left( q\right).$ \end{proof}

\section{Canonical extension of functions}

We start by defining a map%
\begin{equation*}
\widetilde{\left( \cdot \right) }:\left[ L_{loc}^{1}\left( \mathbb{R}\right) %
\right] ^{\ast }\rightarrow \mathfrak{U}(\mathbb{R})
\end{equation*}%
which will be very useful in the extension of functions.

\begin{definition}
\label{tilde} If $u\in \left[ L_{loc}^{1}\left( \mathbb{R}\right) \right]
^{\ast },$ $\widetilde{u}$ denotes the unique ultrafunction such that 
\begin{equation*}
\forall v\in \mathfrak{U}(\mathbb{R}),\ \int^{\ast }\widetilde{u}%
(x)v(x)dx=\int^{\ast }u(x)v(x)dx.
\end{equation*}
\end{definition}

\begin{remark}
\label{rina} Notice that, if $u\in \left[ L^{2}\left( \mathbb{R}\right) %
\right] ^{\ast },$ then $\widetilde{u}=P_{V}u$ where%
\begin{equation*}
P_{V}:\left[ L^{2}\left( \mathbb{R}\right) \right] ^{\ast }\rightarrow 
\mathfrak{U}(\mathbb{R})
\end{equation*}%
is the orthogonal projection.
\end{remark}

The following theorem shows that $\widetilde{u}$ is well defined and unique.

\begin{theorem}
\label{CA} If $u\in \left[ L_{loc}^{1}\left( \mathbb{R}\right) \right]
^{\ast }$ then 
\begin{eqnarray}
\widetilde{u}(x) &=&\sum_{q\in \Sigma }\left[ \int u(\xi )\delta _{q}(\xi
)d\xi \right] \sigma _{q}(x)  \label{bella} \\
&=&\sum_{q\in \Sigma }\left[ \int u(\xi )\sigma _{q}(\xi )d\xi \right]
\delta _{q}(x).  \label{bella+}
\end{eqnarray}
\end{theorem}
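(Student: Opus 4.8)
The plan is to reduce the statement to facts already established about the bases $\{\delta_q\}_{q\in\Sigma}$ and $\{\sigma_q\}_{q\in\Sigma}$ together with the defining property of $\widetilde{u}$ in Definition \ref{tilde}. First I would show that the two right-hand sides of (\ref{bella}) and (\ref{bella+}) define the same ultrafunction, and then that this ultrafunction satisfies the characterizing identity $\int^{\ast}\widetilde{u}(x)v(x)\,dx=\int^{\ast}u(x)v(x)\,dx$ for all $v\in\mathfrak{U}(\mathbb{R})$; uniqueness is then immediate, since the scalar product on $\mathfrak{U}(\mathbb{R})$ is non-degenerate (if two ultrafunctions pair identically against all $v$, their difference pairs to zero against itself, hence is zero).

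For the existence part, set $w(x):=\sum_{q\in\Sigma}\left[\int^{\ast} u(\xi)\delta_q(\xi)\,d\xi\right]\sigma_q(x)$, which is a well-defined element of $\mathfrak{U}(\mathbb{R})$ since $\Sigma$ is hyperfinite and each coefficient $\int^{\ast} u(\xi)\delta_q(\xi)\,d\xi$ lies in $\mathbb{R}^{\ast}$ (here $u\in[L^1_{loc}]^{\ast}$ and $\delta_q\in\mathfrak{U}(\mathbb{R})$ has hyperfinite compact support, so the integral makes sense). Take an arbitrary $v\in\mathfrak{U}(\mathbb{R})$. By Theorem \ref{tbase}(2), $v(x)=\sum_{p\in\Sigma}v(p)\sigma_p(x)$, but it is cleaner to expand $v$ in the Delta-basis instead: write $v(x)=\sum_{p\in\Sigma}c_p\,\delta_p(x)$ with $c_p=\int^{\ast}\sigma_p(\xi)v(\xi)\,d\xi$ by Theorem \ref{tbase}(1). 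Then, using the duality relation (\ref{mimma}) $\int^{\ast}\delta_a(x)\sigma_b(x)\,dx=\delta_{ab}$,
\begin{equation*}
\int^{\ast} w(x)v(x)\,dx=\sum_{q\in\Sigma}\sum_{p\in\Sigma}\left[\int^{\ast} u(\xi)\delta_q(\xi)\,d\xi\right]c_p\int^{\ast}\sigma_q(x)\delta_p(x)\,dx=\sum_{q\in\Sigma}c_q\int^{\ast} u(\xi)\delta_q(\xi)\,d\xi.
\end{equation*}
On the other hand, $\sum_{q\in\Sigma}c_q\delta_q=v$, so by linearity of the $\Lambda$-extended integral $\sum_{q\in\Sigma}c_q\int^{\ast} u(\xi)\delta_q(\xi)\,d\xi=\int^{\ast} u(\xi)\Big(\sum_{q\in\Sigma}c_q\delta_q(\xi)\Big)d\xi=\int^{\ast} u(\xi)v(\xi)\,d\xi$. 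Hence $w$ satisfies the defining property of $\widetilde{u}$, so $\widetilde{u}=w$, which is (\ref{bella}).

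For (\ref{bella+}) I would run the symmetric argument: let $w'(x):=\sum_{q\in\Sigma}\left[\int^{\ast} u(\xi)\sigma_q(\xi)\,d\xi\right]\delta_q(x)$, expand an arbitrary $v\in\mathfrak{U}(\mathbb{R})$ this time in the Sigma-basis as $v(x)=\sum_{p\in\Sigma}v(p)\sigma_p(x)$ (Theorem \ref{tbase}(2)), and again use (\ref{mimma}) to collapse the double sum, obtaining $\int^{\ast} w'(x)v(x)\,dx=\sum_{q\in\Sigma}v(q)\int^{\ast} u(\xi)\sigma_q(\xi)\,d\xi=\int^{\ast} u(\xi)v(\xi)\,d\xi$; by uniqueness $w'=\widetilde{u}$. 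Alternatively, once both $w$ and $w'$ are shown to equal $\widetilde{u}$, equality of (\ref{bella}) and (\ref{bella+}) is automatic. The only point requiring a little care—the main (minor) obstacle—is justifying the interchange of the hyperfinite sum with $\int^{\ast}$ and the manipulation of hyperfinite sums of ultrafunctions: this is handled by transfer, since for each $\lambda$ the corresponding finite-dimensional statement (expansion in dual bases of a finite-dimensional inner product space, linearity of the finite integral) is elementary, and the $\Lambda$-limit preserves it.
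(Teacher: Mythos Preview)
Your proposal is correct and follows essentially the same route as the paper: define the candidate ultrafunction by the sum in (\ref{bella}), expand an arbitrary $v\in\mathfrak{U}(\mathbb{R})$ in the Delta-basis as $v=\sum_{q}v_q\delta_q$ with $v_q=\int\sigma_q v$, collapse the double sum via the duality relation (\ref{mimma}), and conclude that the candidate satisfies the defining property of $\widetilde{u}$; the paper then simply remarks that (\ref{bella+}) is proved similarly, exactly as you do. Your additional remarks on uniqueness and on the transfer justification for interchanging the hyperfinite sum with $\int^{\ast}$ are sound and make explicit points the paper leaves implicit.
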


\begin{proof} It is sufficient to prove that 
\begin{equation*}
\forall v\in \mathfrak{U}(\mathbb{R}),\ \int \sum_{q\in \Sigma }\left[ \int
u(\xi )\delta _{q}(\xi )d\xi \right] \sigma _{q}(x)v(x)dx=\int u(\xi )v(\xi
)d\xi .
\end{equation*}%
We have that $v(x)=\sum_{q\in \Sigma }v_{q}\delta _{q}(x)$ with $v_{q}=\int
\sigma _{q}(x)v(x)dx;$ then 
\begin{eqnarray*}
\ \int \sum_{q\in \Sigma }\left[ \int u(\xi )\delta _{q}(\xi )d\xi \right]
\sigma _{q}(x)v(x)dx &=&\sum_{q\in \Sigma }\left( \int u(\xi )\delta
_{q}(\xi )d\xi \right) \left( \int \sigma _{q}(x)v(x)dx\right) = \\
\sum_{q\in \Sigma }\left( \int u(\xi )\delta _{q}(\xi )d\xi \right) v_{q}
&=&\int u(\xi )\left[ \sum_{q\in \Sigma }v_{q}\delta _{q}(\xi )\right] d\xi
=\int u(\xi )v(\xi )d\xi .
\end{eqnarray*}

The other equalities can be proved similarly. \end{proof}

In particular, if $f\in L_{loc}^{1}(\mathbb{R}),$ the function $\widetilde{%
f^{\ast }}$ is well defined. From now on we will simplify the notation just
writing $\widetilde{f}.$\bigskip

\begin{example}
\label{esempio1} Take $|x|^{-1/2}\in L_{loc}^{1}(-1,1)$, then 
\begin{equation*}
\widetilde{|x|^{-1/2}}=\sum_{q\in \Sigma }\left( \int^{\ast }|\xi
|^{-1/2}\delta _{q}(\xi )d\xi \right) \sigma _{q}(x)
\end{equation*}%
makes sense for every $x\in \mathbb{R}^{\ast };$ in particular 
\begin{equation*}
\left( \widetilde{|x|^{-1/2}}\right) _{x=0}=\int^{\ast }|x|^{-1/2}\delta
_{0}(x)dx,
\end{equation*}%
and it is easy to check that this is an infinite number. Notice that the
ultrafunction $\widetilde{|x|^{-1/2}}$ is different from $\left(
|x|^{-1/2}\right) ^{\ast }$ since the latter is not defined for $x=0\ ($and
they also differ for $|x|>\beta ).$
\end{example}

Now we want to show some interesting relations between $\widetilde{f}\ $and $%
f^{\ast }.$ More precisely we are interested in the following question.

Take $f\in L_{loc}^{1}(\mathbb{R})$ and $\Omega \subset \mathbb{R}$ $;$
which are the conditions that ensure the following:%
\begin{equation}
\forall x\in \Omega ^{\ast },\ \widetilde{f}(x)=f^{\ast }(x)?  \tag{Q}
\label{Q}
\end{equation}%
Notice that if $f\in L_{loc}^{1}(\mathbb{R}),$ $f$ and $f^{\ast }$ are not
defined pointwise and hence the above equality must be intended for almost
every $x.$

\begin{lemma}
Let $\Omega \subset \mathbb{R}$ be an open set and let $f\in L_{loc}^{1}(%
\mathbb{R})\mathfrak{.}$ Then 
\begin{equation*}
\forall ^{a.e.}x\in \Omega \ \ f(x)=0\Leftrightarrow \forall x\in \Omega
^{\ast }\cap \lbrack -\beta ,\beta ]\ \ \widetilde{f}(x)=0.
\end{equation*}
\end{lemma}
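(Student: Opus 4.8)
The plan is to prove the two implications separately, exploiting Definition~\ref{tilde}, which characterizes $\widetilde{f}$ by the relation $\int^{\ast}\widetilde{f}(x)v(x)\,dx=\int^{\ast}f^{\ast}(x)v(x)\,dx$ for all $v\in\mathfrak{U}(\mathbb{R})$, together with the representation formula from Theorem~\ref{CA} and the support properties of the Delta- and Sigma-bases established in Theorem~\ref{tbase}.

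For the forward direction, assume $f(x)=0$ for a.e.\ $x\in\Omega$. Fix a point $q\in\Omega^{\ast}\cap[-\beta,\beta]$. Pick a Delta-basis $\{\delta_a\}_{a\in\Sigma}$ with $q\in\Sigma$; then $\widetilde{f}(q)=\int^{\ast}f^{\ast}(\xi)\delta_q(\xi)\,d\xi$ by formula~(\ref{bella}) evaluated via the dual-basis relation (\ref{mimma}), or directly since $\widetilde{f}(q)=\int^{\ast}\widetilde{f}(x)\delta_q(x)\,dx=\int^{\ast}f^{\ast}(x)\delta_q(x)\,dx$. Now the key geometric input is point~9 of Theorem~\ref{tbase}: $\mathfrak{supp}(\delta_q)\subset\mathfrak{mon}(q)$. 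Since $\Omega$ is open and $q\in\Omega^{\ast}$, transfer gives that $\mathfrak{mon}(q)\subset\Omega^{\ast}$ (every point infinitely close to $q$ lies in $\Omega^{\ast}$, because $\Omega$ contains an honest open ball around the standard part of $q$ when $q$ is finite, and one argues by transfer in general). Therefore $\delta_q$ is supported inside $\Omega^{\ast}$, so $\int^{\ast}f^{\ast}(x)\delta_q(x)\,dx$ only sees the values of $f^{\ast}$ on $\Omega^{\ast}$; by transfer of the hypothesis $f=0$ a.e.\ on $\Omega$, we get $f^{\ast}=0$ a.e.\ on $\Omega^{\ast}$, hence the integral vanishes and $\widetilde{f}(q)=0$.

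For the converse, assume $\widetilde{f}(x)=0$ for all $x\in\Omega^{\ast}\cap[-\beta,\beta]$. I want to conclude $f=0$ a.e.\ on $\Omega$. Take any standard $\varphi\in\mathcal{C}^1(\mathbb{R})$ with compact support contained in $\Omega$, so that by Proposition~\ref{nina}(1) (applied after multiplying by an appropriate characteristic function) $\varphi^{\ast}\cdot\chi_{[-\beta,\beta]}\in\mathfrak{U}(\mathbb{R})$; call this ultrafunction $v$. Using~(\ref{brava+}) we can write $v(x)=\sum_{q\in\Sigma}v(q)\sigma_q(x)$, and since $\mathrm{supp}(\varphi)\Subset\Omega$ the coefficients $v(q)=\varphi^{\ast}(q)$ vanish unless $q\in\Omega^{\ast}$ (for $\beta$ large enough that $\mathrm{supp}(\varphi)\subset(-\beta,\beta)$). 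Then
\begin{equation*}
\int^{\ast}f^{\ast}(x)v(x)\,dx=\int^{\ast}\widetilde{f}(x)v(x)\,dx=\sum_{q\in\Sigma}v(q)\int^{\ast}\widetilde{f}(x)\sigma_q(x)\,dx,
\end{equation*}
and for each $q$ appearing in the sum (i.e.\ $q\in\Omega^{\ast}$) point~9 of Theorem~\ref{tbase} gives $\mathfrak{supp}(\sigma_q)\subset\mathfrak{mon}(q)\subset\Omega^{\ast}$, so $\int^{\ast}\widetilde{f}(x)\sigma_q(x)\,dx$ is a value obtained by integrating $\widetilde{f}$ against something supported where $\widetilde{f}=0$ by hypothesis; hence each term is $0$ and the whole integral vanishes. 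Thus $\int^{\ast}f^{\ast}(x)\varphi^{\ast}(x)\,dx=0$ for every standard test function $\varphi\in\mathcal{C}^1_0(\Omega)$, which by transfer says $\int_{\mathbb{R}}f(x)\varphi(x)\,dx=0$ for all such $\varphi$; since $\mathcal{C}^1_0(\Omega)$ is dense enough to separate $L^1_{loc}$, this forces $f=0$ a.e.\ on $\Omega$.

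\textbf{Main obstacle.} The delicate point in both directions is the passage "$q\in\Omega^{\ast}$ and $\Omega$ open $\Rightarrow\mathfrak{mon}(q)\subset\Omega^{\ast}$", which is what makes the local support property~9 bite; one must be careful when $q$ is infinite (so its standard part does not exist) and argue purely by transfer using that $\Omega$ is open in $\mathbb{R}$, noting that for an internal/hyperfinite statement one works with $\Omega^{\ast}$ directly. The secondary technical nuisance is the measure-theoretic bookkeeping: "$\mathfrak{supp}(g)\subset A$ and $\widetilde{f}=0$ on $A$" yields $\int^{\ast}\widetilde{f}g=0$ only because $\widetilde{f}$ restricted to $[-\beta,\beta]$ is genuinely zero on $A$ as an element of $L^2(\mathbb{R})^{\ast}$ — one should phrase everything in terms of the $L^2$-pairing rather than pointwise values, invoking Remark~\ref{rina} and the fact that ultrafunctions lie in $\widetilde{\mathcal{C}^1(\mathbb{R})}$ on each $\mathbb{I}_j$, so "$=0$ at every point of an interval" and "$=0$ a.e.\ on that interval" coincide.
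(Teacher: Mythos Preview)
Your forward direction follows the same line as the paper's (use the representation~(\ref{bella}) together with the locality of $\delta_q$), but the step ``$q\in\Omega^{\ast}$ and $\Omega$ open $\Rightarrow \mathfrak{mon}(q)\subset\Omega^{\ast}$'' is false as stated: take $\Omega=(0,1)$ and $q$ a positive infinitesimal, so that $st(q)=0\in\partial\Omega$; then $\mathfrak{mon}(q)=\mathfrak{mon}(0)$ contains negative infinitesimals and is not contained in $(0,1)^{\ast}$. The obstacle you flag is therefore not about infinite $q$ but about $q$ infinitely close to $\partial\Omega$. What actually rescues the argument is the \emph{sharper} support bound from Theorem~\ref{tbase}(6)--(8), namely $\mathfrak{supp}(\delta_q)\subset\overline{\mathbb{I}_j}$ (or $\overline{\mathbb{I}_{j-1}\cup\mathbb{I}_j}$), combined with the standing hypothesis $\mathbb{R}\subset\Gamma$: every real boundary point of $\Omega$ is then a grid point, so no grid interval can straddle $\partial\Omega$, and one can check that $f^{\ast}=0$ a.e.\ on the relevant $\overline{\mathbb{I}_j}$. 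The paper's one-line ``follows by~(\ref{bella})'' hides exactly this.

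Your converse takes a genuinely different route. The paper argues by contradiction: it picks an interval $I\subset\Omega$ on which (it assumes) $f$ has a sign, chooses a grid interval $\mathbb{I}_j\subset I^{\ast}$, and tests directly against the grid function $\chi_j\in\mathfrak{U}(\mathbb{R})$ to obtain $0<\int^{\ast}f^{\ast}\chi_j=\int^{\ast}\widetilde{f}\chi_j=0$. This is shorter than your $\mathcal{C}^1_0$-density argument; note also that your Sigma-basis decomposition is unnecessary, since $v=\varphi^{\ast}\chi_{[-\beta,\beta]}$ is already supported in $\mathfrak{supp}(\varphi)^{\ast}\subset\Omega^{\ast}\cap[-\beta,\beta]$, whence $\widetilde{f}v\equiv 0$ pointwise. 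On the other hand, your duality argument is in one respect more honest: the paper's reduction ``$f\not\equiv 0$ on $\Omega$ $\Rightarrow$ there is an interval $I\subset\Omega$ with $f>0$ a.e.\ on $I$'' is not valid for a general $f\in L^1_{loc}$, and your use of $\mathcal{C}^1_0(\Omega)$ test functions sidesteps that issue entirely.
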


\begin{proof} We recall that, by (\ref{bella}), \textbf{\ }%
\begin{equation*}
\widetilde{f}(x)=\sum_{q\in \Sigma }\left[ \int f^{\ast }(\xi )\delta
_{q}(\xi )d\xi \right] \sigma _{q}(x).
\end{equation*}

If $\forall ^{a.e.}x\in \Omega ,\ f(x)=0$ then by Leibnitz principle we have
that $\forall ^{a.e.}x\in \Omega ^{\ast },\ f^{\ast }(x)=0$, so that $%
\forall x\in \Omega ^{\ast }\cap \lbrack -\beta ,\beta ],\ \widetilde{f}%
(x)=0 $ follows by (\ref{bella}).\\
Conversely, let us suppose that there is an open bounded interval $%
I\subseteq \Omega $ such that $\forall ^{a.e.}x\in I$ $f(x)\neq 0$ (we
suppose that $\forall ^{a.e.}x\in I$ $f(x)>0).$ By Leibnitz principle, we
have that $\forall ^{a.e.}x\in I^{\ast }$ $f^{\ast }(x)>0.$ Let $q\in 
\mathbb{I}_{j\text{ }}.$ Then%
\begin{eqnarray*}
0 &<&\int_{\mathbb{I}_{j\text{ }}}^{\ast }f^{\ast }(x)\chi
_{j}(x)dx=\int^{\ast }f^{\ast }(x)\chi _{j}(x)dx= \\
&=&\int^{\ast }\widetilde{f}(x)\chi _{j}(x)dx=0,
\end{eqnarray*}

since by hypothesis $\ \widetilde{f}(x)=0$ $\forall x\in \Omega ^{\ast }\cap
\lbrack -\beta ,\beta ]\supset I^{\ast }.$ And this is clearly absurd. \end{proof}

\begin{corollary}
\label{pluto}Let $\Omega \subset \mathbb{R}$ be an open set and let $f,g\in
L_{loc}^{1}(\mathbb{R})\mathfrak{;}$ then 
\begin{equation*}
\forall ^{a.e.}x\in \Omega \ \ f(x)=g(x)\Leftrightarrow \forall x\in \Omega
^{\ast }\cap \lbrack -\beta ,\beta ]\ \text{\ }\widetilde{f}(x)=\widetilde{g}%
(x).
\end{equation*}
\end{corollary}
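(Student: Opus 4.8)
The plan is to deduce Corollary \ref{pluto} directly from the Lemma by applying it to the difference $h = f - g$, exploiting the linearity of the canonical extension map $\widetilde{(\cdot)}$ established in Definition \ref{tilde} and Theorem \ref{CA}.

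First I would set $h := f - g$, which lies in $L_{loc}^{1}(\mathbb{R})$ since $L_{loc}^{1}(\mathbb{R})$ is a vector space. The key point is that the map $\widetilde{(\cdot)} : [L_{loc}^{1}(\mathbb{R})]^{\ast} \to \mathfrak{U}(\mathbb{R})$ is linear: this follows from the defining property in Definition \ref{tilde}, since for all $v \in \mathfrak{U}(\mathbb{R})$ we have $\int^{\ast} \widetilde{(f-g)}\, v\, dx = \int^{\ast}(f^{\ast}-g^{\ast}) v\, dx = \int^{\ast} f^{\ast} v\, dx - \int^{\ast} g^{\ast} v\, dx = \int^{\ast} \widetilde{f} v\, dx - \int^{\ast} \widetilde{g} v\, dx = \int^{\ast}(\widetilde{f}-\widetilde{g})v\, dx$, and uniqueness of the ultrafunction with this property forces $\widetilde{h} = \widetilde{f} - \widetilde{g}$. (Alternatively one can read linearity straight off formula (\ref{bella}), since both $\xi \mapsto \int h^{\ast}(\xi)\delta_q(\xi)d\xi$ and the sum over $q \in \Sigma$ are linear operations.)

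Then the chain of equivalences is immediate. We have $\forall^{a.e.} x \in \Omega,\ f(x) = g(x)$ if and only if $\forall^{a.e.} x \in \Omega,\ h(x) = 0$. By the Lemma applied to $h$, this holds if and only if $\forall x \in \Omega^{\ast} \cap [-\beta,\beta],\ \widetilde{h}(x) = 0$, i.e. (using $\widetilde{h} = \widetilde{f} - \widetilde{g}$) if and only if $\forall x \in \Omega^{\ast} \cap [-\beta,\beta],\ \widetilde{f}(x) = \widetilde{g}(x)$. This completes the argument.

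There is essentially no obstacle here; the only point requiring a word of care is the justification of linearity of $\widetilde{(\cdot)}$, which is why I would spell it out via the defining variational property and uniqueness rather than treating it as obvious. Everything else is a routine substitution into the already-proved Lemma.
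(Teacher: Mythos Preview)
Your proof is correct and follows exactly the same approach as the paper: apply the preceding Lemma to $h=f-g$ and use the linearity of $\widetilde{(\cdot)}$. In fact, your justification of linearity via the defining variational property and uniqueness is more detailed than the paper's, which simply asserts linearity without further comment.
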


\begin{proof} This follows immediatly by applying the previous theorem to
the function $h(x)=f(x)-g(x),$ since the operation $f\rightarrow \widetilde{f%
}$ is linear$.$ \end{proof}

\begin{theorem}
\label{locale}Let $\Omega \subset \mathbb{R}$ be an open bounded set, let $%
f\in L_{loc}^{1}(\mathbb{R})\mathfrak{;}$ if $f|_{\Omega }\in \mathcal{C}%
^{1}\left( \Omega \right) $ then 
\begin{equation*}
\forall x\in \Omega ^{\ast }\cap \lbrack -\beta ,\beta ]\text{ }\ \widetilde{%
f}(x)=f^{\ast }(x).
\end{equation*}
\end{theorem}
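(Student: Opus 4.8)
The plan is to reduce the claim to Corollary~\ref{pluto}. We want to show $\widetilde{f}(x)=f^{\ast}(x)$ for all $x\in\Omega^{\ast}\cap[-\beta,\beta]$, knowing only that $f|_{\Omega}\in\mathcal{C}^1(\Omega)$. The issue is that $\mathcal{C}^1$-ness of $f$ is only local (on $\Omega$), whereas Proposition~\ref{nina}(1) gives $g^{\ast}\cdot\chi_{[-\beta,\beta]}\in\mathfrak{U}(\mathbb{R})$ only for globally $\mathcal{C}^1$ functions $g$. So the first step is a localization: fix an arbitrary point $x_0\in\Omega$ and choose a bounded open interval $I$ with $x_0\in I\subseteq\overline{I}\subseteq\Omega$. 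By a standard cutoff/extension argument, pick $g\in\mathcal{C}^1(\mathbb{R})$ with $g=f$ on $I$ (e.g. multiply $f$ by a $\mathcal{C}^1$ bump supported in $\Omega$ and equal to $1$ on $I$; this is legitimate since $f|_{\Omega}\in\mathcal{C}^1(\Omega)$).

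Now apply Corollary~\ref{pluto} to the pair $f,g$ on the open set $I$: since $f(x)=g(x)$ for all $x\in I$, we get $\widetilde{f}(x)=\widetilde{g}(x)$ for all $x\in I^{\ast}\cap[-\beta,\beta]$. On the other hand, $g\in\mathcal{C}^1(\mathbb{R})$, so by Proposition~\ref{nina}(1), $g^{\ast}\cdot\chi_{[-\beta,\beta]_{\mathbb{R}^{\ast}}}\in\mathfrak{U}(\mathbb{R})$; since $\widetilde{(\cdot)}$ is the $L^2$-projection onto $\mathfrak{U}(\mathbb{R})$ (Remark~\ref{rina}) and it fixes elements already in $\mathfrak{U}(\mathbb{R})$, we conclude $\widetilde{g}(x)=g^{\ast}(x)$ for all $x\in[-\beta,\beta]$. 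Hence $\widetilde{f}(x)=g^{\ast}(x)=f^{\ast}(x)$ for every $x\in I^{\ast}\cap[-\beta,\beta]$, and in particular at every point infinitely close to $x_0$; letting $x_0$ range over $\Omega$ and noting $\Omega^{\ast}=\bigcup_{x_0\in\Omega}(\mathfrak{mon}(x_0)\cap\Omega^{\ast})$ by transfer of the open cover $\Omega=\bigcup_{x_0} I_{x_0}$, the equality holds on all of $\Omega^{\ast}\cap[-\beta,\beta]$.

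One technical point deserves care: the identification $\widetilde{g}=g^{\ast}$ requires knowing that the $L^2$-projection acts as the identity on $\mathfrak{U}(\mathbb{R})$, i.e. that $g^{\ast}\cdot\chi_{[-\beta,\beta]}$ and $g^{\ast}$ induce the same functional against test ultrafunctions — but every $v\in\mathfrak{U}(\mathbb{R})$ already vanishes outside $[-\beta,\beta]$, so $\int^{\ast}g^{\ast}v\,dx=\int^{\ast}(g^{\ast}\chi_{[-\beta,\beta]})v\,dx$, and the defining property of $\widetilde{(\cdot)}$ then forces $\widetilde{g}=g^{\ast}\chi_{[-\beta,\beta]}$ as ultrafunctions; restricted to $[-\beta,\beta]$ this is $g^{\ast}$, which is what we need. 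The main obstacle is purely the localization step — making sure the passage from the local hypothesis ``$f|_{\Omega}\in\mathcal{C}^1(\Omega)$'' to a genuinely global $\mathcal{C}^1$ function agreeing with $f$ on a neighborhood of each point is carried out cleanly and that Corollary~\ref{pluto} is invoked on an honest open subinterval of $\Omega$ — everything after that is bookkeeping with transfer.
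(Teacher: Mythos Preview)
Your reduction to Corollary~\ref{pluto} via a standard $\mathcal{C}^1$ cutoff is sound through step~5: for each standard $x_0\in\Omega$ you correctly obtain $\widetilde{f}=f^{\ast}$ on $I_{x_0}^{\ast}$. The gap is in the final covering step. The assertion
\[
\Omega^{\ast}=\bigcup_{x_0\in\Omega}\bigl(\mathfrak{mon}(x_0)\cap\Omega^{\ast}\bigr)
\]
is false, and ``transfer of the open cover $\Omega=\bigcup_{x_0}I_{x_0}$'' does not yield it: transfer does not commute with an externally indexed infinite union. Concretely, take $\Omega=(0,1)$ and $y=\varepsilon$ a positive infinitesimal. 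Then $y\in\Omega^{\ast}$, but $sh(y)=0\notin\Omega$, so $y$ lies in no $\mathfrak{mon}(x_0)$ with $x_0\in\Omega$, and in no $I^{\ast}$ for a standard interval $I$ with $\overline{I}\subset\Omega$. Worse, if $f'$ blows up at the boundary (say $f(x)=\sqrt{x}$ on $(0,1)$), there is \emph{no} standard $g\in\mathcal{C}^1(\mathbb{R})$ agreeing with $f$ on any standard open interval whose $\ast$-extension contains $y$. So your method, as it stands, proves the conclusion only on the nearstandard part of $\Omega^{\ast}$, i.e.\ on $\{y\in\Omega^{\ast}:sh(y)\in\Omega\}$.

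The paper's proof avoids this by not passing through a standard comparison function at all. It works directly with the hyperfinite grid: given $y\in\Omega^{\ast}$, locate $y$ in a single infinitesimal cell $\mathbb{I}_j$, use the support property of the sigma basis (Theorem~\ref{tbase}(6)) to reduce the expansion~(\ref{bella}) for $\widetilde{f}(y)$ to a sum over $\Sigma\cap\mathbb{I}_j$ only, and then note that $g_j:=f^{\ast}\chi_j$ is already an element of $V(\mathbb{I}_j)\subset\mathfrak{U}(\mathbb{R})$, so that $\widetilde{f}(y)=g_j(y)=f^{\ast}(y)$. The point is that membership in $V(\mathbb{I}_j)$ requires only that $f^{\ast}|_{\mathbb{I}_j}$ be the restriction of something in $\widetilde{\mathcal{C}^1(\mathbb{R})}$, which is a hyperfinite-dimensional internal space much larger than the $\ast$-images of standard $\mathcal{C}^1$ functions; this is what lets the argument go through even at points of $\Omega^{\ast}$ infinitely close to $\partial\Omega$. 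That local, internal mechanism is the idea missing from your approach.
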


\begin{proof} Let $\left\{ \delta _{a}(x)\right\} _{a\in \Sigma }$ be a
Delta basis, let $y\in \Omega ^{\ast }$ and let $y\in \mathbb{I}_{j\text{ }%
}. $ Since, by (\ref{tbase}), for every $q\in \Sigma $ with $q\notin \mathbb{%
I}_{j\text{ }}\sigma _{q}(y)=0,$ by (\ref{bella}) we deduce that 
\begin{equation*}
\widetilde{f}(y)=\sum_{q\in \Sigma \cap \mathbb{I}_{j\text{ }}}\left[ \int_{%
\mathbb{I}_{j\text{ }}}f^{\ast }(\xi )\delta _{q}(\xi )d\xi \right] \sigma
_{q}(y).
\end{equation*}

Now let $g_{j}(x)$ be the function such that%
\begin{equation*}
g_{j}(x)=\left\{ 
\begin{array}{cc}
f^{\ast }(x) & \text{if }x\in \mathbb{I}_{j\text{ }}; \\ 
0 & \text{otherwise.}%
\end{array}%
\right.
\end{equation*}

Since $f|_{\Omega }\in \mathcal{C}^{1}\left( \Omega \right) $ then $g_{j}(x)$
is an ultrafunction. By construction, we have that $g_{j}(y)=\widetilde{f}%
(y) $ since, by (\ref{brava+}), 
\begin{eqnarray*}
g_{j}(y) &=&\sum_{q\in \Sigma }g_{j}(q)\sigma _{q}(y)=\sum_{q\in \Sigma \cap 
\mathbb{I}_{j\text{ }}}\left[ \int_{\mathbb{I}_{j\text{ }}}g_{j}(\xi )\delta
_{q}(\xi )d\xi \right] \sigma _{q}(y)= \\
&=&\sum_{q\in \Sigma \cap \mathbb{I}_{j\text{ }}}\left[ \int_{\mathbb{I}_{j%
\text{ }}}f^{\ast }(\xi )\delta _{q}(\xi )d\xi \right] \sigma _{q}(y)=%
\widetilde{f}(y).
\end{eqnarray*}

But, by definition, $g_{j}(y)=f^{\ast }(y);$ hence we deduce that $f^{\ast
}(y)=\widetilde{f}(y).$ \end{proof}

\begin{example}
If $f(x)=1,\ $then%
\begin{equation*}
\widetilde{1}(x)=\left\{ 
\begin{array}{cc}
1 & if\ x\in \left[ -\beta ,\beta \right] _{\mathbb{R}^{\ast }}; \\ 
0 & if\ x\notin \left[ -\beta ,\beta \right] _{\mathbb{R}^{\ast }}.%
\end{array}%
\right.
\end{equation*}
\end{example}

By Theorem \ref{locale} and the above example, we get:

\begin{corollary}
\label{yuo}Let $f\in \mathcal{C}^{1}\left( \Omega \right) $; then, 
\begin{equation*}
\widetilde{f}=f^{\ast }\cdot \widetilde{1}
\end{equation*}
\end{corollary}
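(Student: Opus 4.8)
The plan is to avoid a pointwise comparison of the two sides and instead exhibit $f^{\ast}\cdot\widetilde{1}$ directly as an element of $\mathfrak{U}(\mathbb{R})$, then invoke the uniqueness clause of Definition \ref{tilde} (equivalently Desideratum \ref{e}).

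First I would observe that, by the computation of $\widetilde{1}$ just above the statement, $\widetilde{1}$ is the characteristic function $\chi_{[-\beta,\beta]_{\mathbb{R}^{\ast}}}$ (taking the value $1$ at $\pm\beta$); since $f\in\mathcal{C}^{1}(\mathbb{R})$, Proposition \ref{nina}(1) then gives $g:=f^{\ast}\cdot\widetilde{1}=f^{\ast}\cdot\chi_{[-\beta,\beta]_{\mathbb{R}^{\ast}}}\in\mathfrak{U}(\mathbb{R})$. Next, for an arbitrary $v\in\mathfrak{U}(\mathbb{R})$ I would compare the functions $g\,v$ and $f^{\ast}v$: every element of $\mathfrak{U}(\mathbb{R})$ vanishes outside $[-\beta,\beta]_{\mathbb{R}^{\ast}}$ while $\widetilde{1}\equiv 1$ on that interval, so $g(x)v(x)=f^{\ast}(x)\widetilde{1}(x)v(x)=f^{\ast}(x)v(x)$ for every $x$ (the two expressions can disagree only where $\widetilde{1}\neq 1$, hence only where $v=0$). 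Therefore $\int^{\ast}g\,v\,dx=\int^{\ast}f^{\ast}\,v\,dx$ for all $v\in\mathfrak{U}(\mathbb{R})$, and by the defining property of $\widetilde{(\cdot)}$ this forces $g=\widetilde{f^{\ast}}=\widetilde{f}$, which is the claim.

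The only point requiring care is knowing that $g$ is a genuine ultrafunction on all of $[-\beta,\beta]$, including at the interior grid points and near the infinite endpoints $\pm\beta$; this is exactly the content of Proposition \ref{nina}(1), so no separate ``locality'' step beyond it is needed. If one prefers to stay closer to Theorem \ref{locale}, the alternative is to argue on each $\mathbb{I}_{j}$ that $f^{\ast}\chi_{j}\in V(\mathbb{I}_{j})$ (because $f^{\ast}\in\widetilde{\mathcal{C}^{1}(\mathbb{R})}$), reproduce the $\sigma$-expansion argument of that theorem to obtain $\widetilde{f}=f^{\ast}$ on each $\mathbb{I}_{j}$, and then sum over $j$, both sides being $0$ outside $[-\beta,\beta]$.
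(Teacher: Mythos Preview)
Your argument is correct. The paper's own proof is just the one-line remark ``By Theorem \ref{locale} and the above example'': it reads $\widetilde f(x)=f^{\ast}(x)$ pointwise on $[-\beta,\beta]$ from Theorem \ref{locale}, and combines this with $\widetilde 1=\chi_{[-\beta,\beta]_{\mathbb R^{\ast}}}$ to conclude. Your main route is genuinely different: rather than a pointwise comparison, you first place $f^{\ast}\cdot\widetilde 1$ inside $\mathfrak U(\mathbb R)$ via Proposition \ref{nina}(1) and then identify it with $\widetilde f$ by the uniqueness clause of Definition \ref{tilde}. This has the advantage of avoiding the slight looseness in the paper's appeal to Theorem \ref{locale}, which is only stated for \emph{bounded} open $\Omega\subset\mathbb R$ and so does not literally cover all of $[-\beta,\beta]$ in one stroke. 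Your closing paragraph, proposing to argue interval by interval via $f^{\ast}\chi_j\in V(\mathbb I_j)$, is essentially how one makes the paper's Theorem-\ref{locale} route rigorous here.
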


By Theorem \ref{locale}, given a function $f(x)\in \mathcal{C}^{1}\left( 
\mathbb{R}\right) $ we have that $\widetilde{f}(x)$ extends $f(x)$ to $\left[
-\beta ,\beta \right] _{\mathbb{R}^{\ast }}$. $\widetilde{f}(x)$ will be
called the \textbf{canonical extension of }$f(x).$ With some abuse of
notation, $\widetilde{f}(x)$ will be called the "canonical extension of $%
f(x) $" even when $f(x)\in L_{loc}^{1}(\mathbb{R}).$

\begin{example}
If we consider the Example \ref{esempio1}, by Theorem \ref{locale}, we have
that 
\begin{equation*}
\forall a\in \lbrack -\beta ,\beta ]\backslash \mathfrak{mon(}0),\ \left( 
\widetilde{|x|^{-1/2}}\right) _{x=a}=\left( |x|^{-1/2}\right) _{x=a}^{\ast
}=|a|^{-1/2}.
\end{equation*}
\end{example}

\begin{example}
For a fixed $k\in \mathbb{R},$ the function $e^{ikx}$ defines a unique
ultrafunction $\widetilde{e^{ikx}}.$ Notice that $\widetilde{e^{ikx}}$ is
different from the natural extension of $e^{ikx}$ even if%
\begin{equation*}
\forall x\in \mathfrak{gal}\left( 0\right) ,\ \widetilde{e^{ikx}}=e^{ikx}.
\end{equation*}
\end{example}

\section{Derivative\label{D}}

\begin{definition}
For every ultrafunction $u\in \mathfrak{U}(\mathbb{R}),$ the derivative $%
Du(x)$ of $u(x)$ is the ultrafunction defined by the following formula:%
\begin{equation}
Du(x)=P_{\mathfrak{U}}u^{\prime }+\dsum\limits_{j=1}^{\ell -1}\triangle
u(\gamma _{j})\delta _{\gamma _{j}}(x),  \label{marina}
\end{equation}%
where $P_{\mathfrak{U}}u^{\prime }$ denotes the orthogonal projection of $%
u^{\prime }$ on $\mathfrak{U}(\mathbb{R})$ w.r.t. the $L^{2}$ scalar product
and, for every $j=1,...,l-1$,%
\begin{equation*}
\triangle u(\gamma _{j})=u^{+}(\gamma _{j})-u^{-}(\gamma _{j}).
\end{equation*}
\end{definition}

\begin{theorem}
For every $u,v\in \mathfrak{U}(\mathbb{R})$ the following equality holds:%
\begin{equation}
\int Du(x)v(x)\ dx=-\int u(x)Dv(x)\ dx+\left[ u(x)v(x)\right] _{-\beta
}^{\beta }.  \label{manola}
\end{equation}
\end{theorem}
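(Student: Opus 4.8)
The plan is to compute both sides of \eqref{manola} directly from the defining formula \eqref{marina} for $D$, using the orthogonal splitting $\mathfrak{U}(\mathbb{R})=\bigoplus_{j=0}^{\ell}V(\mathbb{I}_j)$ of Proposition~\ref{nina} to reduce the global integral to a sum of integrals over the intervals $\mathbb{I}_j$, where the ultrafunctions are genuinely $\mathcal{C}^1$ and the classical integration-by-parts formula applies termwise. First I would write $u=\sum_j u_j\chi_j$ and $v=\sum_j v_j\chi_j$ with $u_j,v_j\in\widetilde{\mathcal{C}^1(\mathbb{R})}$, and observe that for the projection term we have $\int (P_{\mathfrak{U}}u')v\,dx=\int u'v\,dx$ for all $v\in\mathfrak{U}(\mathbb{R})$, because $v$ is in the image of the projection and $P_{\mathfrak{U}}$ is self-adjoint; hence this term equals $\sum_j\int_{\mathbb{I}_j}^{\ast} u_j'(x)v_j(x)\,dx$.

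Next I would apply the transfer of the classical formula $\int_{\gamma_j}^{\gamma_{j+1}} u_j'v_j\,dx = -\int_{\gamma_j}^{\gamma_{j+1}} u_jv_j'\,dx + [u_jv_j]_{\gamma_j}^{\gamma_{j+1}}$ on each interval, valid since $u_j,v_j$ are internally $\mathcal{C}^1$ on $\overline{\mathbb{I}_j}$; the boundary terms are evaluated using the one-sided limits $u^{\pm}(\gamma_j),v^{\pm}(\gamma_j)$ from Proposition~\ref{nina}. Summing over $j=0,\dots,\ell-1$, the interior endpoints $\gamma_1,\dots,\gamma_{\ell-1}$ each appear twice, contributing $\sum_{j=1}^{\ell-1}\bigl(u^{-}(\gamma_j)v^{-}(\gamma_j)-u^{+}(\gamma_j)v^{+}(\gamma_j)\bigr)$, while the two extreme endpoints give $u^{+}(-\beta)v^{+}(-\beta)$ at $\gamma_0=-\beta$ (with a minus sign from the lower limit) and $u^{-}(\beta)v^{-}(\beta)$ at $\gamma_\ell=\beta$, i.e.\ exactly $[u(x)v(x)]_{-\beta}^{\beta}$ in the sense of the conventions fixed before Definition~\ref{dede}. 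So far this yields
\begin{equation*}
\int (P_{\mathfrak{U}}u')v\,dx = -\int u\,(P_{\mathfrak{U}}v')\,dx + [uv]_{-\beta}^{\beta} + \sum_{j=1}^{\ell-1}\bigl(u^{-}(\gamma_j)v^{-}(\gamma_j)-u^{+}(\gamma_j)v^{+}(\gamma_j)\bigr),
\end{equation*}
where I have again used self-adjointness of $P_{\mathfrak{U}}$ to rewrite $\int u\,v'\,dx=\int u\,(P_{\mathfrak{U}}v')\,dx$.

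Finally I would handle the jump terms. Using the characterization \eqref{yole} of $\delta_{\gamma_j}$, the contribution of the sum in \eqref{marina} to $\int Du\,v\,dx$ is $\sum_{j=1}^{\ell-1}\triangle u(\gamma_j)\,v(\gamma_j)=\sum_{j=1}^{\ell-1}\bigl(u^{+}(\gamma_j)-u^{-}(\gamma_j)\bigr)\cdot\frac{v^{+}(\gamma_j)+v^{-}(\gamma_j)}{2}$, and symmetrically for the $Dv$ side; adding $\int Du\,v\,dx + \int u\,Dv\,dx$, the extra jump contributions combine with the leftover sum displayed above. The key computation is the purely algebraic identity, for each $j$,
\begin{equation*}
\bigl(u^{-}v^{-}-u^{+}v^{+}\bigr) + \tfrac12(u^{+}-u^{-})(v^{+}+v^{-}) + \tfrac12(v^{+}-v^{-})(u^{+}+u^{-}) = 0
\end{equation*}
(evaluated at $\gamma_j$), which one checks by expanding. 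This cancellation is the only real obstacle, and it is a short calculation; the rest is bookkeeping of endpoints. Once it is in place, the two remaining terms are $-\int u\,(P_{\mathfrak{U}}v')\,dx - \int u\,\sum_j\triangle v(\gamma_j)\delta_{\gamma_j}\,dx = -\int u\,Dv\,dx$, together with the boundary term $[uv]_{-\beta}^{\beta}$, which is exactly \eqref{manola}.
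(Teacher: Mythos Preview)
Your proof is correct and follows essentially the same route as the paper: compute $\int Du\,v\,dx+\int u\,Dv\,dx$, split each $D$ into its projection part and its jump part, apply the classical (transferred) integration by parts on each $\mathbb{I}_j$ for the projection terms, and then verify that the interior boundary contributions cancel against the jump contributions via the identity $\triangle u\cdot v+\triangle v\cdot u=u^{+}v^{+}-u^{-}v^{-}$. The only cosmetic differences are that you invoke self-adjointness of $P_{\mathfrak{U}}$ explicitly (the paper simply drops $P_V$ when passing to the interval integrals) and that you isolate $[uv]_{-\beta}^{\beta}$ first and package the remaining cancellation as a single algebraic identity, whereas the paper computes the two sums separately and telescopes at the end.
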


\begin{proof} We have:%
\begin{equation*}
\int (Du(x)v(x)+u(x)Dv(x))dx=
\end{equation*}%
\begin{equation*}
\int \left( P_{V}u^{\prime }(x)+\dsum\limits_{j=1}^{\ell -1}\triangle
u(\gamma _{j})\delta _{\gamma _{j}}(x)\right) v(x)dx+\int \left(
P_{V}v^{\prime }(x)+\dsum\limits_{j=1}^{\ell -1}\triangle v(\gamma
_{j})\delta _{\gamma _{j}}(x)\right) u(x)dx=
\end{equation*}

\begin{equation*}
\int \left[ P_{V}u^{\prime }(x)v(x)+u(x)P_{V}v^{\prime }(x)\right] dx+
\end{equation*}%
\begin{equation*}
\int \left[ \left( \dsum\limits_{j=0}^{\ell -1}\triangle u(\gamma
_{j})\delta _{\gamma _{j}}(x)\right) v(x)+\left( \dsum\limits_{j=0}^{\ell
-1}\triangle v(\gamma _{j})\delta _{\gamma _{j}}(x)\right) u(x)\right] dx=
\end{equation*}%
\begin{equation*}
\int \left[ P_{V}u^{\prime }(x)v(x)+u(x)P_{V}v^{\prime }(x)\right]
dx+\dsum\limits_{j=1}^{\ell -1}\left[ \triangle u(\gamma _{j})v(\gamma
_{j})+\triangle v(\gamma _{j})u(\gamma _{j})\right] .
\end{equation*}%
Now let us compute the two terms of the sum separately; the first one:%
\begin{equation*}
\int \left[ P_{V}u^{\prime }(x)v(x)+u(x)P_{V}v^{\prime }(x)\right]
dx=\dsum\limits_{j=0}^{\ell -1}\int_{\gamma _{j}}^{\gamma _{j+1}}\left[
P_{V}u^{\prime }(x)v(x)+u(x)P_{V}v^{\prime }(x)\right] dx=
\end{equation*}%
\begin{equation*}
\dsum\limits_{j=0}^{\ell -1}\int_{\gamma _{j}}^{\gamma _{j+1}}\left[
u^{\prime }(x)v(x)+u(x)v^{\prime }(x)\right] dx=\dsum\limits_{j=0}^{\ell
-1}\int_{\gamma _{j}}^{\gamma _{j+1}}\left( u(x)v(x)\right) ^{\prime }dx=
\end{equation*}%
\begin{equation*}
=\dsum\limits_{j=0}^{\ell -1}\left[ u^{-}(\gamma _{j+1})v^{-}(\gamma
_{j+1})-u^{+}(\gamma _{j})v^{+}(\gamma _{j})\right] .
\end{equation*}

The second one:%
\begin{equation*}
\dsum\limits_{j=1}^{\ell -1}\left[ \triangle u(\gamma _{j})v(\gamma
_{j})+\triangle v(\gamma _{j})u(\gamma _{j})\right] =
\end{equation*}%
\begin{equation*}
\dsum\limits_{j=1}^{\ell -1}\left( (u^{+}(\gamma _{j})-u^{-}(\gamma
_{j}))\left( \frac{v^{+}(\gamma _{j})+v^{-}(\gamma _{j})}{2}\right)
+(v^{+}(\gamma _{j})-v^{-}(\gamma _{j}))\left( \frac{u^{+}(\gamma
_{j})+u^{-}(\gamma _{j})}{2}\right) \right) =
\end{equation*}%
\begin{equation*}
\dsum\limits_{j=1}^{\ell -1}(u^{+}(\gamma _{j})v^{+}(\gamma
_{j})-u^{-}(\gamma _{j})v^{-}(\gamma _{j})).
\end{equation*}

Thus 
\begin{equation*}
\int \left[ P_{V}u^{\prime }v(x)+u(x)P_{V}v^{\prime }\right]
dx+\dsum\limits_{j=0}^{\ell -1}\left( \triangle u(\gamma _{j})v(\gamma
_{j})+\triangle v(\gamma _{j})u(\gamma _{j})\right) =
\end{equation*}%
\begin{equation*}
\dsum\limits_{j=0}^{\ell -1}\left( u^{-}(\gamma _{j+1})v^{-}(\gamma
_{j+1})-u^{+}(\gamma _{j})v^{+}(\gamma _{j})\right)
+\dsum\limits_{j=1}^{\ell -1}(u^{+}(\gamma _{j})v^{+}(\gamma
_{j})-u^{-}(\gamma _{j})v^{-}(\gamma _{j})).
\end{equation*}%
But $\dsum\limits_{j=0}^{\ell -1}\left( u^{-}(\gamma _{j+1})v^{-}(\gamma
_{j+1})-u^{+}(\gamma _{j})v^{+}(\gamma _{j})\right) =-u(-\beta )v(-\beta
)+u(\beta )v(\beta )+\dsum\limits_{j=1}^{\ell -1}\left( u^{-}(\gamma
_{j})v^{-}(\gamma _{j})-u^{+}(\gamma _{j})v^{+}(\gamma _{j})\right) $, hence%
\begin{equation*}
\dsum\limits_{j=0}^{\ell -1}\left( u^{-}(\gamma _{j+1})v^{-}(\gamma
_{j+1})-u^{+}(\gamma _{j})v^{+}(\gamma _{j})\right)
+\dsum\limits_{j=1}^{\ell -1}(u^{+}(\gamma _{j})v^{+}(\gamma
_{j})-u^{-}(\gamma _{j})v^{-}(\gamma _{j}))=
\end{equation*}%
\begin{equation*}
u(\beta )v(\beta )-u(-\beta )v(-\beta ).\qedhere
\end{equation*}

\end{proof}

\begin{remark}
\label{zurlino}The generalized derivative 
\begin{equation*}
D:\mathfrak{U}(\mathbb{R})\rightarrow \mathfrak{U}(\mathbb{R})
\end{equation*}%
is a linear operator, as can be directly derived by $\left( \ref{marina}%
\right) .$ Moreover for every ultrafunction $u\in \mathfrak{U}(\mathbb{R}%
)\cap \mathcal{C}^{1}(\mathbb{R})^{\ast }$ we have that%
\begin{equation}
Du(x)=\widetilde{u^{\prime }(x)},  \label{carla}
\end{equation}%
since in this case $\triangle u(\gamma _{j})=0$ for every $j=1,...,l-1$. In
particular, if $f\in \mathcal{C}^{2}(\mathbb{R})$ then $\forall x\in \lbrack
-\beta ,\beta ]$%
\begin{equation}
Df^{\ast }(x)=\left( f^{\prime }\right) ^{\ast }(x),  \label{carlino}
\end{equation}%
because in this case $\left( f^{\prime }\right) ^{\ast }(x)\in \mathfrak{U}(%
\mathbb{R}),$ so $P_{\mathfrak{U}}\left( f^{\prime }\right) ^{\ast }=\left(
f^{\prime }\right) ^{\ast }.$
\end{remark}

\begin{remark}
Notice that by (\ref{carla}) and (\ref{carlino}) we have that $\forall f\in 
\mathcal{C}^{1}(\mathbb{R})$ and $\forall x\in \mathbb{R}$%
\begin{equation*}
D\widetilde{f}(x)=\widetilde{f^{\prime }(x)}\sim f^{\prime }(x)
\end{equation*}%
and $\forall f\in \mathcal{C}^{2}(\mathbb{R})$ and $\forall x\in \mathbb{R}$%
\begin{equation*}
D\widetilde{f}(x)=f^{\prime }(x).
\end{equation*}%
In this sense, $D$ extends the usual derivative to all ultrafunctions and to
all the points in $\mathbb{R}^{\ast }.$
\end{remark}

\textbf{Example 1: }By (\ref{manola}) we have that%
\begin{equation}
D\widetilde{1}=0.  \label{clara}
\end{equation}%
If $u(x)=\widetilde{x}$ then 
\begin{equation*}
D\widetilde{x}=\widetilde{1}.
\end{equation*}

\textbf{Example 2: }If $a\neq -\beta ,b\neq \beta $ and $u(x)=\chi _{\left[
a,b\right] }(x),$ then%
\begin{equation*}
D\chi _{\left[ a,b\right] }=\delta _{a}-\delta _{b}.
\end{equation*}

\textbf{Example 3: }If $a=-\beta ,b\neq \beta $ and $u(x)=\chi _{\left[ a,b%
\right] }(x),$ then%
\begin{equation*}
D\chi _{\left[ a,b\right] }=-\delta _{b},
\end{equation*}

and if $a\neq -\beta ,b=\beta $ and $u(x)=\chi _{\left[ a,b\right] }(x),$
then%
\begin{equation*}
D\chi _{\left[ a,b\right] }=\delta _{a}.
\end{equation*}

\textbf{Example 4: }$u(x)=w(x)\chi _{\left[ a,b\right] }(x)\ $with $a,b\in
\Gamma \backslash \left\{ -\beta ,\beta \right\} ,$ then, by (\ref{marina})%
\begin{equation*}
u(x)^{\prime }=P_{V}w^{\prime }(x)\chi _{\left[ a,b\right] }(x)+w(a)\delta
_{a}(x)-w(b)\delta _{b}(x).
\end{equation*}

\section{Definite integral}

Since every ultrafunction is an internal function, the definite integral is
well defined:%
\begin{equation*}
\int_{a}^{b}u(x)dx:=\left( \int_{a}^{b}\right) ^{\ast }u(x)dx.
\end{equation*}

Let us observe that, for every $a,b\in \Gamma $, the characteristic function 
$\chi _{\left[ a,b\right] }$ of $[a,b]$ in the usual sense and the
characteristic function $\chi _{\left[ a,b\right] _{\mathbb{R}^{\ast }}}$ of 
$[a,b]$ in the sense of ultrafunctions are different (at most)\ only in the
points $a$ and $b$. In particular, for every ultrafunction $u(x)$ we have 
\begin{equation*}
\int_{a}^{b}u(x)dx=\int^{\ast }u(x)\chi _{\left[ a,b\right]
}(x)dx=\int^{\ast }u(x)\chi _{\left[ a,b\right] _{\mathbb{R}^{\ast }}}(x)dx.
\end{equation*}%
This observation is important to prove the following theorem:

\begin{corollary}
\label{katia}(\textbf{Fundamental Theorem of Calculus}) If $a,b\in \Gamma ,$
then%
\begin{equation*}
\int_{a}^{b}Du(x)dx=u(b)-u(a).
\end{equation*}
\end{corollary}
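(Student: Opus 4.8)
The plan is to reduce the statement to the already-established integration-by-parts formula \eqref{manola}, using the constant ultrafunction as the test function. The key observation is the one recorded just before the statement: since $a,b\in\Gamma$, the definite integral $\int_a^b u(x)\,dx$ coincides with $\int^{\ast} u(x)\chi_{[a,b]_{\mathbb{R}^{\ast}}}(x)\,dx$, and the ultrafunction $\chi_{[a,b]_{\mathbb{R}^{\ast}}}$ belongs to $\mathfrak{U}(\mathbb{R})$ (it is a grid function in the sense of Definition~\ref{cuf}). Moreover, by Example~1 in Section~\ref{D} we have $D\widetilde{1}=0$, and since $a,b$ are grid points we can apply the derivative formula to $\chi_{[a,b]}$.

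First I would take $v(x)=\chi_{[a,b]}(x)$ (with the ultrafunction convention \eqref{chi}) in formula \eqref{manola}, obtaining
\begin{equation*}
\int Du(x)\,\chi_{[a,b]}(x)\,dx=-\int u(x)\,D\chi_{[a,b]}(x)\,dx+\bigl[u(x)\chi_{[a,b]}(x)\bigr]_{-\beta}^{\beta}.
\end{equation*}
The boundary term vanishes whenever $a\neq-\beta$ and $b\neq\beta$, since then $\chi_{[a,b]}$ is zero in a neighbourhood of $\pm\beta$; the cases $a=-\beta$ or $b=\beta$ must be handled separately but are analogous, using the special values prescribed in \eqref{chi}. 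Next I would invoke Example~2 of Section~\ref{D}, namely $D\chi_{[a,b]}=\delta_a-\delta_b$, so that the right-hand side becomes $-\int u(x)(\delta_a(x)-\delta_b(x))\,dx=-(u(a)-u(b))=u(b)-u(a)$ by the defining property \eqref{deltafunction} of the Delta ultrafunctions. Finally, the left-hand side equals $\int_a^b Du(x)\,dx$ by the remark preceding the statement. Combining these gives the claim.

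The main obstacle — really the only subtlety — is the bookkeeping at the endpoints $\pm\beta$. When $a=-\beta$ one has $D\chi_{[a,b]}=-\delta_b$ (Example~3), the boundary term at $-\beta$ contributes $u(-\beta)\chi_{[a,b]}(-\beta)=u(-\beta)$, and one must check that $-\int u\,(-\delta_b)\,dx - u(-\beta) = u(b)-u(-\beta)$ still holds; the symmetric computation works for $b=\beta$, and the case $a=-\beta,\ b=\beta$ combines both. One should also confirm that $u(b)$ and $u(a)$ in the statement are interpreted via the conventions $u(\gamma_j)=\tfrac12(u^+(\gamma_j)+u^-(\gamma_j))$, $u(-\beta)=u^+(-\beta)$, $u(\beta)=u^-(\beta)$, so that they match exactly the quantities produced by pairing against $\delta_a$, $\delta_b$ and by the boundary evaluation — but this is precisely how those delta ultrafunctions were defined in Theorem~\ref{delta}, so no genuine work is required. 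All told this is a short corollary of \eqref{manola} and the computations of Section~\ref{D}.
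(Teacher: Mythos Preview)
Your proposal is correct and follows essentially the same approach as the paper's own proof: apply \eqref{manola} with $v=\chi_{[a,b]}$, use the formulas $D\chi_{[a,b]}=\delta_a-\delta_b$ (and the variants in Example~3 for the endpoint cases), and then evaluate via \eqref{deltafunction}. The case split you outline ($a=-\beta$, $b=\beta$, both) matches the paper exactly, including the use of $D\widetilde{1}=0$ when $[a,b]=[-\beta,\beta]$.
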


\begin{proof} We have: 
\begin{eqnarray*}
\int_{a}^{b}Du(x)dx &=&\int^{\ast }Du(x)\chi _{\left[ a,b\right] }(x)dx= \\
\int^{\ast }Du(x)\chi _{\left[ a,b\right] _{\mathbb{R}^{\ast }}}(x)dx
&=&-\int u(x)D\chi _{\left[ a,b\right] _{\mathbb{R}^{\ast }}}(x)dx+\left[
u(x)\chi _{\left[ a,b\right] _{\mathbb{R}^{\ast }}}\right] _{-\beta }^{\beta
}.
\end{eqnarray*}

Now if $a\neq -\beta ,b\neq \beta $ we have $\left[ u(x)\chi _{\left[ a,b%
\right] _{\mathbb{R}^{\ast }}}\right] _{-\beta }^{\beta }=0$ and $D\chi _{%
\left[ a,b\right] _{\mathbb{R}^{\ast }}}(x)=\delta _{a}-\delta _{b},$ so 
\begin{eqnarray*}
-\int u(x)D\chi _{\left[ a,b\right] _{\mathbb{R}^{\ast }}}(x)dx &=&-\int
u(x)(\delta _{a}-\delta _{b})dx= \\
&&u(b)-u(a).
\end{eqnarray*}

If $a=-\beta ,b\neq \beta $ we have $\left[ u(x)\chi _{\left[ a,b\right] _{%
\mathbb{R}^{\ast }}}\right] _{-\beta }^{\beta }=-u(-\beta )$ and $D\chi _{%
\left[ a,b\right] }(x)=-\delta _{b},$ so 
\begin{eqnarray*}
-\int u(x)D\chi _{\left[ a,b\right] _{\mathbb{R}^{\ast }}}(x)dx-u(-\beta )
&=&-\int u(x)(-\delta _{b})dx-u(-\beta )= \\
u(b)-u(-\beta ) &=&u(b)-u(a).
\end{eqnarray*}

The case $a\neq -\beta ,b=\beta $ can be proved similarly. If $a=-\beta
,b=\beta $ then%
\begin{eqnarray*}
\int Du(x)\chi _{\left[ -\beta ,\beta \right] }(x)dx &=&\int Du(x)\widetilde{%
1}dx= \\
-\int u(x)D\widetilde{1}dx+[u(x)]_{-\beta }^{\beta } &=&u(\beta )-u(-\beta ),
\end{eqnarray*}%
since $D\widetilde{1}=0.$ \end{proof}

Notice that $\mathbb{R}\subset \Gamma ;$ thus if $f\in \mathcal{C}^{1}(%
\mathbb{R})\ $we have that, $\forall a,b\in \mathbb{R}$,%
\begin{equation*}
\int_{a}^{b}D\widetilde{f}(x)dx=f(b)-f(a).
\end{equation*}%
A question that arises is: does it hold, for ultrafunctions, some kind of
"rule of integration by parts for continuous functions", at least for the
points in $\Gamma $? E.g., is it true that, if $u,v\in \mathfrak{U}(\mathbb{R%
})$ and $a,b\in \Gamma ,$ then 
\begin{equation}
\int_{a}^{b}Du(x)v(x)\ dx=-\int_{a}^{b}u(x)Dv(x)\ dx+\left[ u(x)v(x)\right]
_{a}^{b}?  \label{byotyny}
\end{equation}%
The answer is no, as a simple computation shows. Nevertheless, we have the
following:

\begin{prop}
Let $u,v\in \mathfrak{U}(\mathbb{R})\cap \mathcal{C}^{1}(\mathbb{R})^{\ast
}, $ and $\gamma _{n}<\gamma _{m}\in \Gamma .$ Then%
\begin{equation*}
\int_{\gamma _{n}}^{\gamma _{m}}Du(x)v(x)\ dx=-\int_{\gamma _{n}}^{\gamma
_{m}}u(x)Dv(x)\ dx+u^{-}(\gamma _{m})v^{-}(\gamma _{m})-u^{+}(\gamma
_{n})v^{+}(\gamma _{n}).
\end{equation*}
\end{prop}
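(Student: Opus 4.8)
The plan is to reduce the claim to the global integration‑by‑parts formula (\ref{manola}) by "localizing" via characteristic functions, exactly as in the proof of Corollary~\ref{katia}, but keeping track of the boundary terms more carefully because here $u$ and $v$ are both genuine ultrafunctions rather than one of them being a characteristic function. First I would write $\gamma_n = a$, $\gamma_m = b$ and use the observation recorded just before Corollary~\ref{katia}, namely that for any ultrafunction $w$ one has $\int_a^b w(x)\,dx = \int^{\ast} w(x)\chi_{[a,b]_{\mathbb{R}^{\ast}}}(x)\,dx$, to rewrite the left‑hand side as $\int^{\ast} Du(x)\,v(x)\chi_{[a,b]}(x)\,dx$.

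Next, since $u,v\in\mathfrak{U}(\mathbb{R})\cap\mathcal{C}^1(\mathbb{R})^{\ast}$, Remark~\ref{zurlino} gives $Du = \widetilde{u'}$ and $Dv = \widetilde{v'}$, with no jump contributions; in particular the $\triangle$‑terms in (\ref{marina}) vanish. The cleanest route is probably to bypass (\ref{manola}) and instead argue directly on the grid: decompose $\int_{\gamma_n}^{\gamma_m} = \sum_{j=n}^{m-1}\int_{\gamma_j}^{\gamma_{j+1}}$, and on each interval $\mathbb{I}_j$ note that $Du$ restricted to $\mathbb{I}_j$ agrees with the classical derivative $u'$ (since $P_{\mathfrak{U}}u'$ coincides with $u'$ on each $\mathbb{I}_j$ because $u|_{\mathbb{I}_j}\in\widetilde{\mathcal{C}^1(\mathbb{R})}$ — this is the content of the computation of the "first term" in the proof of (\ref{manola})). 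Hence on each subinterval the ordinary product rule applies to the $\mathcal{C}^1$‑internal functions $u,v$, giving $\int_{\gamma_j}^{\gamma_{j+1}}\bigl(Du\,v + u\,Dv\bigr)\,dx = \int_{\gamma_j}^{\gamma_{j+1}}(uv)'\,dx = u^-(\gamma_{j+1})v^-(\gamma_{j+1}) - u^+(\gamma_j)v^+(\gamma_j)$ by the transfer of the fundamental theorem of calculus. Summing over $j=n,\dots,m-1$, the sum telescopes: the interior terms pair up as $u^-(\gamma_j)v^-(\gamma_j) - u^+(\gamma_j)v^+(\gamma_j)$ for $n<j<m$, and since $u,v$ are continuous at every interior grid point $\gamma_j$ (being in $\mathcal{C}^1(\mathbb{R})^{\ast}$) we have $u^-(\gamma_j)=u^+(\gamma_j)$ and likewise for $v$, so these cancel. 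What survives is precisely $u^-(\gamma_m)v^-(\gamma_m) - u^+(\gamma_n)v^+(\gamma_n)$, which is the asserted boundary term.

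The main obstacle — really the only subtle point — is justifying that $P_{\mathfrak{U}}u'$ coincides with the classical restriction $u'|_{\mathbb{I}_j}$ on each subinterval, i.e.\ that the $L^2$‑projection does nothing locally when $u$ is already $\mathcal{C}^1$‑internal. This is exactly the step used implicitly in the proof of Theorem giving (\ref{manola}) (the line where $\int_{\gamma_j}^{\gamma_{j+1}} P_V u'(x)v(x)\,dx$ is replaced by $\int_{\gamma_j}^{\gamma_{j+1}} u'(x)v(x)\,dx$), and it relies on the orthogonal splitting $\mathfrak{U}(\mathbb{R}) = \bigoplus_j V(\mathbb{I}_j)$ from Proposition~\ref{nina}(6) together with the fact that $u'\chi_j\in V(\mathbb{I}_j)\subseteq\mathfrak{U}(\mathbb{R})$ already. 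Once that is granted, everything else is the transferred one‑variable fundamental theorem of calculus plus a telescoping sum, and the continuity of $u,v$ at interior grid points kills all the intermediate boundary contributions.
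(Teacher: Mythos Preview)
Your overall plan coincides with the paper's proof: use Remark~\ref{zurlino} to reduce $Du$ to $P_{\mathfrak{U}}u'$, split $[\gamma_n,\gamma_m]$ into the grid intervals, replace the projected derivative by the classical derivative inside each integral, apply the transferred fundamental theorem of calculus, and telescope using continuity of $u,v$ at the interior grid points. Structurally there is nothing to add.

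However, your justification of the one ``subtle point'' is off. You claim that $P_{\mathfrak{U}}u'$ agrees pointwise with $u'$ on each $\mathbb{I}_j$ because $u'\chi_j\in V(\mathbb{I}_j)$. That inclusion is in general false: $u\in\mathfrak{U}(\mathbb{R})\cap\mathcal{C}^1(\mathbb{R})^{\ast}$ only gives $u|_{\mathbb{I}_j}=v_j|_{\mathbb{I}_j}$ for some $v_j\in\widetilde{\mathcal{C}^1(\mathbb{R})}$, hence $u'|_{\mathbb{I}_j}=v_j'|_{\mathbb{I}_j}$, but $v_j'$ lies only in $\widetilde{\mathcal{C}^0(\mathbb{R})}$ and need not belong to $\widetilde{\mathcal{C}^1(\mathbb{R})}$. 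So $u'\chi_j$ need not lie in $V(\mathbb{I}_j)$, and the projection need not be the identity.

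What actually makes the step $\int_{\gamma_j}^{\gamma_{j+1}} P_j u'\,v\,dx = \int_{\gamma_j}^{\gamma_{j+1}} u'\,v\,dx$ work is the other factor: since $v\in\mathfrak{U}(\mathbb{R})$ one has $v\chi_j\in V(\mathbb{I}_j)$, and then the defining property of the orthogonal projection onto $V(\mathbb{I}_j)$ gives $\langle P_j u',\,v\chi_j\rangle=\langle u',\,v\chi_j\rangle$. This is also exactly how the analogous replacement in the proof of (\ref{manola}) is justified. Once you swap in this correct reason, your argument goes through and matches the paper's proof line for line.
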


\begin{proof} By (\ref{carla}), since $u,v\in \mathfrak{U}(\mathbb{R})\cap 
\mathcal{C}^{1}(\mathbb{R})^{\ast }$ then $Du=\widetilde{u^{\prime }}$ and $%
Dv=\widetilde{v^{\prime }}.$ Moreover, since $\mathfrak{U}(\mathbb{R}%
)=\bigoplus_{j=0}^{l-1}\mathbb{I}_{j},$ if for every $j=0,...,l-1$ we denote
by $P_{j}$ the orthogonal projection on $\mathbb{I}_{j}$ we have%
\begin{equation*}
P_{\mathfrak{U}}u^{\prime }(x)=\sum_{j=0}^{l-1}P_{j}(u^{\prime }(x)).
\end{equation*}%
Now, if $m=n+1,$ since $u$ and $v$ are continuous we have 
\begin{eqnarray*}
\int_{\gamma _{n}}^{\gamma _{m}}Du(x)v(x)\ dx &=&\int_{\gamma _{n}}^{\gamma
_{m}}P_{\mathfrak{U}}u^{\prime }(x)v(x)\ dx= \\
\int_{\gamma _{n}}^{\gamma _{m}}P_{n}u^{\prime }(x)v(x)\ dx &=&\int_{\gamma
_{n}}^{\gamma _{m}}u^{\prime }(x)v(x)\ dx=
\end{eqnarray*}%
\begin{equation*}
-\int_{\gamma _{n}}^{\gamma _{m}}u(x)v^{\prime }(x)dx+u^{-}(\gamma
_{m})v^{-}(\gamma _{m})-u^{+}(\gamma _{n})v^{+}(\gamma _{n})=
\end{equation*}%
\begin{equation*}
-\int_{\gamma _{n}}^{\gamma _{m}}u(x)P_{j}v^{\prime }(x)dx+u^{-}(\gamma
_{m})v^{-}(\gamma _{m})-u^{+}(\gamma _{n})v^{+}(\gamma _{n})=
\end{equation*}%
\begin{equation*}
-\int_{\gamma _{n}}^{\gamma _{m}}u(x)Dv(x)dx+u^{-}(\gamma _{m})v^{-}(\gamma
_{m})-u^{+}(\gamma _{n})v^{+}(\gamma _{n}).
\end{equation*}

In the general case, 
\begin{equation*}
\int_{\gamma _{n}}^{\gamma _{m}}Du(x)v(x)\ dx=\sum_{i=n}^{m-1}\int_{\gamma
_{i}}^{\gamma _{i+1}}Du(x)v(x)\ dx=
\end{equation*}%
\begin{equation*}
\sum_{i=n}^{m-1}\left[ -\int_{\gamma _{i}}^{\gamma
_{i+1}}u(x)Dv(x)dx+u^{-}(\gamma _{i+1})v^{-}(\gamma _{i+1})-u^{+}(\gamma
_{i})v^{+}(\gamma _{i})\right] ,
\end{equation*}

and since $u,v$ are continuous we have%
\begin{equation*}
\sum_{i=n}^{m-1}\left[ -\int_{\gamma _{i}}^{\gamma
_{i+1}}u(x)Dv(x)dx+u^{-}(\gamma _{i+1})v^{-}(\gamma _{i+1})-u^{+}(\gamma
_{i})v^{+}(\gamma _{i})\right] =
\end{equation*}%
\begin{equation*}
\sum_{i=n}^{m-1}\left[ -\int_{\gamma _{i}}^{\gamma _{i+1}}u(x)Dv(x)dx\right]
+u^{-}(\gamma _{m})v^{-}(\gamma _{m})-u^{+}(\gamma _{n})v^{+}(\gamma _{n})=
\end{equation*}%
\begin{equation*}
-\int_{\gamma _{n}}^{\gamma _{m}}u(x)Dv(x)\ dx+u^{-}(\gamma
_{m})v^{-}(\gamma _{m})-u^{+}(\gamma _{n})v^{+}(\gamma _{n}).\qedhere
\end{equation*}%
\end{proof}

The previous proposition is, in general, false if at least one between $u,v$
is not in $\mathcal{C}^{1}(\mathbb{R})^{\ast }.$ The reason is that, by
definition, the derivative has the following expression:%
\begin{equation*}
Du(x)=P_{\mathfrak{U}}u^{\prime }+\dsum\limits_{j=1}^{\ell -1}\triangle
u(\gamma _{j})\delta _{\gamma _{j}}(x),
\end{equation*}%
and the presence of $\dsum\limits_{j=1}^{\ell -1}\triangle u(\gamma
_{j})\delta _{\gamma _{j}}(x)$ is what makes (\ref{byotyny}) to be false.
Just for sake of completeness, we now show how to obtain a relaxed version of
(\ref{byotyny}) by considering a different possible notion of derivative on $%
\mathfrak{U}(\mathbb{R}).$ The relaxed version of (\ref{byotyny}) is the
following: since the functions in $\mathfrak{U}(\mathbb{R})$ are piecewise $%
\mathcal{C}^{1}$ functions, does it hold, for ultrafunctions, an analogue of
the rule of integration by parts for piecewise $\mathcal{C}^{1}$ functions?
Namely, is it true that, if $u,v\in \mathfrak{U}(\mathbb{R})$ and $\gamma
_{n}<\gamma _{m}\in \Gamma ,$ then%
\begin{equation}
\int_{\gamma _{n}}^{\gamma _{m}}Du(x)v(x)\ dx=-\int_{\gamma _{n}}^{\gamma
_{m}}u(x)Dv(x)\ dx+\sum_{i=n}^{m-1}\left[ u^{-}(\gamma _{i+1})v^{-}(\gamma
_{i+1})-u^{+}(\gamma _{i})v^{+}(\gamma _{i})\right] ?  \label{partii}
\end{equation}%
With the operator $D$ the answer is no. But there is a different linear
operator that actually satisfies $(\ref{partii}):$

\begin{definition}
We denote by $D_{2}u(x)$ the linear operator such that, for every $u\in 
\mathfrak{U}(\mathbb{R})$ , we have%
\begin{equation*}
D_{2}u(x)=P_{\mathfrak{U}}(u^{\prime }(x)).
\end{equation*}
\end{definition}

Since $\mathfrak{U}(\mathbb{R})=\bigoplus_{j=0}^{l-1}\mathbb{I}_{j},$ if we
denote by $P_{j}$ the orthogonal projection on $\mathbb{I}_{j},$ we have%
\begin{equation*}
D_{2}u(x)=P_{\mathfrak{U}}u^{\prime }(x)=\sum_{j=0}^{l-1}P_{j}(u^{\prime
}(x)).
\end{equation*}%
Moreover we have that, if $u(x)$ is continuous in $\gamma _{j},\gamma _{j+1}$%
, then 
\begin{equation*}
Du(x)=D_{2}u(x)
\end{equation*}%
on $\mathbb{I}_{j}.$ In particular, if $u(x)$ is continuous in $[-\beta
,\beta ]$ then 
\begin{equation*}
Du(x)=D_{2}u(x).
\end{equation*}%
This new linear operator is what we need to obtain the generalization to $%
\mathfrak{U}(\mathbb{R})$ of the rule of integration by parts for piecewise
continuous functions:

\begin{theorem}
\textbf{(Integration by parts for piecewise }$\mathcal{C}^{1}$\textbf{\
functions) }For every $u,v\in \mathfrak{U}(\mathbb{R})$ and $\gamma
_{n}<\gamma _{m}\in \Gamma $ we have 
\begin{equation*}
\int_{\gamma _{n}}^{\gamma _{m}}D_{2}u(x)v(x)\ dx=-\int_{\gamma
_{n}}^{\gamma _{m}}u(x)D_{2}v(x)\ dx+\sum_{i=n}^{m-1}\left[ u^{-}(\gamma
_{i+1})v^{-}(\gamma _{i+1})-u^{+}(\gamma _{i})v^{+}(\gamma _{i})\right] .
\end{equation*}
\end{theorem}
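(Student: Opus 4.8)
The plan is to prove the identity on a single grid interval $[\gamma_i,\gamma_{i+1}]$ and then sum over $i=n,\dots,m-1$, using additivity of the definite integral over $[\gamma_n,\gamma_m]=\bigcup_{i=n}^{m-1}[\gamma_i,\gamma_{i+1}]$. Unlike in the continuous case, the boundary contributions are not meant to telescope further, precisely because $u$ and $v$ may jump at the interior grid points; so the per-interval statement, summed, already is the claimed formula.

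First I would record the two structural facts I need. By Proposition \ref{nina}, $\mathfrak{U}(\mathbb{R})=\bigoplus_{j}V(\mathbb{I}_j)$ is an orthogonal decomposition, and by definition $D_2u=P_{\mathfrak{U}}u'=\sum_{j}P_j(u')$, where $P_j$ is the orthogonal projection onto $V(\mathbb{I}_j)$ and $P_j(u')$ is supported on $\overline{\mathbb{I}_j}$. Hence on the open interval $\mathbb{I}_i=(\gamma_i,\gamma_{i+1})$ only the $j=i$ term survives, so $\int_{\gamma_i}^{\gamma_{i+1}}D_2u(x)v(x)\,dx=\int_{\gamma_i}^{\gamma_{i+1}}P_i(u')(x)v(x)\,dx$. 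Writing $v=\sum_j v_j$ with $v_j\in V(\mathbb{I}_j)$, the restriction of $v$ to the open interval $\mathbb{I}_i$ equals $v_i\in V(\mathbb{I}_i)$, so by the defining property of the orthogonal projection (namely $u'-P_i(u')\perp V(\mathbb{I}_i)$ in $L^2(\mathbb{I}_i)^{\ast}$) we get $\int_{\gamma_i}^{\gamma_{i+1}}P_i(u')(x)v(x)\,dx=\int_{\gamma_i}^{\gamma_{i+1}}u'(x)v(x)\,dx$. The symmetric identity $\int_{\gamma_i}^{\gamma_{i+1}}u(x)D_2v(x)\,dx=\int_{\gamma_i}^{\gamma_{i+1}}u(x)v'(x)\,dx$ follows by the same argument with the roles of $u$ and $v$ exchanged.

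Next I would use the definition of $\mathfrak{U}(\mathbb{R})$: on $\overline{\mathbb{I}_i}$ both $u$ and $v$ coincide with elements of $\widetilde{\mathcal{C}^1(\mathbb{R})}$, i.e. with internal functions that are $\Lambda$-limits of $\mathcal{C}^1$ functions on the bounded interval $\overline{\mathbb{I}_i}$, and whose one-sided values at $\gamma_i$ and $\gamma_{i+1}$ are exactly $u^+(\gamma_i),u^-(\gamma_{i+1})$ and $v^+(\gamma_i),v^-(\gamma_{i+1})$. Applying the transfer of the classical integration-by-parts formula on a closed interval to these functions gives
$$\int_{\gamma_i}^{\gamma_{i+1}}u'(x)v(x)\,dx=u^-(\gamma_{i+1})v^-(\gamma_{i+1})-u^+(\gamma_i)v^+(\gamma_i)-\int_{\gamma_i}^{\gamma_{i+1}}u(x)v'(x)\,dx.$$
Combining this with the two projection identities from the previous step yields the asserted formula on $[\gamma_i,\gamma_{i+1}]$, and summing over $i=n,\dots,m-1$ completes the proof.

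The main obstacle — indeed the only non-formal point — is the identity $\int_{\gamma_i}^{\gamma_{i+1}}P_i(u')v\,dx=\int_{\gamma_i}^{\gamma_{i+1}}u'v\,dx$. Here one must check that $u'|_{\mathbb{I}_i}$, the internal pointwise derivative, genuinely lies in $L^2(\mathbb{I}_i)^{\ast}$ so that $P_i(u')$ is defined (this holds because $u|_{\overline{\mathbb{I}_i}}$ is a $\Lambda$-limit of $\mathcal{C}^1$ functions on a bounded interval, hence its derivative is a $\Lambda$-limit of continuous, hence $L^2$, functions there), and that the restriction of $v$ to the \emph{open} interval $\mathbb{I}_i$ is exactly its $V(\mathbb{I}_i)$-component, so that the orthogonality relation defining $P_i$ applies to it. Once these two points are in place, everything else is routine bookkeeping.
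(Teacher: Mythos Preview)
Your argument is correct and follows essentially the same route as the paper: establish the identity on a single grid interval $[\gamma_i,\gamma_{i+1}]$ by replacing $D_2u$ with $u'$ via the projection property, apply the (transferred) classical integration by parts, replace $v'$ back by $D_2v$, and then sum over $i=n,\dots,m-1$. You are simply more explicit than the paper about why $\int_{\gamma_i}^{\gamma_{i+1}}P_i(u')\,v=\int_{\gamma_i}^{\gamma_{i+1}}u'\,v$, which the paper writes down without comment.
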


\begin{proof} If $m=n+1$ then 
\begin{equation*}
\int_{\gamma _{n}}^{\gamma _{m}}D_{2}u(x)v(x)\ dx=\int_{\gamma _{n}}^{\gamma
_{m}}u^{\prime }(x)v(x)\ dx=
\end{equation*}%
\begin{equation*}
-\int_{\gamma _{n}}^{\gamma _{m}}u(x)v^{\prime }(x)dx+u^{-}(\gamma
_{m})v^{-}(\gamma _{m})-u^{+}(\gamma _{n})v^{+}(\gamma _{n})=
\end{equation*}%
\begin{equation*}
-\int_{\gamma _{n}}^{\gamma _{m}}u(x)D_{2}v(x)dx+u^{-}(\gamma
_{m})v^{-}(\gamma _{m})-u^{+}(\gamma _{n})v^{+}(\gamma _{n}).
\end{equation*}%
In the general case we have%
\begin{equation*}
\int_{\gamma _{n}}^{\gamma _{m}}D_{2}u(x)v(x)\
dx=\sum_{i=n}^{m-1}\int_{\gamma _{i}}^{\gamma _{i+1}}D_{2}u(x)v(x)\ dx=
\end{equation*}%
\begin{equation*}
\sum_{i=n}^{m-1}\left( -\int_{\gamma _{i}}^{\gamma
_{i+1}}u(x)D_{2}v(x)dx+u^{-}(\gamma _{i+1})v^{-}(\gamma _{i+1})-u^{+}(\gamma
_{i})v^{+}(\gamma _{i})\right) =
\end{equation*}%
\begin{equation*}
-\int_{\gamma _{n}}^{\gamma _{m}}u(x)D_{2}v(x)\ dx+\sum_{i=n}^{m-1}\left[
u^{-}(\gamma _{i+1})v^{-}(\gamma _{i+1})-u^{+}(\gamma _{i})v^{+}(\gamma _{i})%
\right] .\qedhere
\end{equation*}%
\end{proof}

In particular, since $D_{2}\widetilde{1}=0,$ it is immediate to prove that
the following holds:

\begin{corollary}
(\textbf{Fundamental Theorem of Calculus for piecewise continuous functions}%
) For every $u\in \mathfrak{U}(\mathbb{R})$ and $\gamma _{n}<\gamma _{m}\in
\Gamma $ we have 
\begin{equation*}
\int_{\gamma _{n}}^{\gamma _{m}}D_{2}u(x)dx=\sum_{i=n}^{m-1}\left[
u^{-}(\gamma _{i+1})-u^{+}(\gamma _{i})\right] .
\end{equation*}
\end{corollary}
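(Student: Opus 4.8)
The plan is to obtain this corollary as the special case $v=\widetilde{1}$ of the integration-by-parts theorem for piecewise $\mathcal{C}^{1}$ functions just proved.

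First I would record the two facts about $\widetilde{1}$ that make the specialization work. By the Example following Theorem~\ref{locale}, $\widetilde{1}(x)=1$ for every $x\in\left[-\beta,\beta\right]_{\mathbb{R}^{\ast}}$; in particular $\widetilde{1}$ is (internally) continuous on $\left[-\beta,\beta\right]$, so all of its one-sided limits at the grid points equal $1$, i.e.\ $\widetilde{1}^{-}(\gamma_{i+1})=\widetilde{1}^{+}(\gamma_{i})=1$ for $n\le i\le m-1$ (this also covers the endpoints $\gamma_{0}=-\beta$ and $\gamma_{\ell}=\beta$, whose one-sided limits are defined by point 4 of Proposition~\ref{nina}). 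Secondly, $D_{2}\widetilde{1}=0$: since $\widetilde{1}$ is constant on $\left[-\beta,\beta\right]$ we have $\widetilde{1}^{\prime}\equiv 0$ on each $\mathbb{I}_{j}$, hence $D_{2}\widetilde{1}=P_{\mathfrak{U}}\widetilde{1}^{\prime}=0$ (equivalently, $D_{2}\widetilde{1}=D\widetilde{1}$ because $\widetilde{1}$ is continuous, and $D\widetilde{1}=0$ was established in Section~\ref{D}).

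Then I would apply the integration-by-parts theorem with this $v$. The term $-\int_{\gamma_{n}}^{\gamma_{m}}u(x)D_{2}\widetilde{1}(x)\,dx$ vanishes because $D_{2}\widetilde{1}=0$, while the boundary sum collapses, using $\widetilde{1}^{\pm}(\gamma_{i})=1$, to
\[
\sum_{i=n}^{m-1}\left[u^{-}(\gamma_{i+1})\widetilde{1}^{-}(\gamma_{i+1})-u^{+}(\gamma_{i})\widetilde{1}^{+}(\gamma_{i})\right]=\sum_{i=n}^{m-1}\left[u^{-}(\gamma_{i+1})-u^{+}(\gamma_{i})\right].
\]
Since $\int_{\gamma_{n}}^{\gamma_{m}}D_{2}u(x)\widetilde{1}(x)\,dx=\int_{\gamma_{n}}^{\gamma_{m}}D_{2}u(x)\,dx$ (again because $\widetilde{1}\equiv 1$ on that interval), this is exactly the claimed identity.

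There is essentially no obstacle: the whole argument is a substitution into an already-proved formula, and the only point requiring care is checking that $\widetilde{1}$ is genuinely the constant $1$ on the full closed interval $\left[-\beta,\beta\right]_{\mathbb{R}^{\ast}}$ — in particular at $\gamma_{0}=-\beta$ and $\gamma_{\ell}=\beta$, so that the formula remains valid when $\gamma_{n}=-\beta$ or $\gamma_{m}=\beta$ — which is precisely the content of that Example. If a self-contained argument were preferred, one could instead split $\int_{\gamma_{n}}^{\gamma_{m}}D_{2}u\,dx=\sum_{i=n}^{m-1}\int_{\gamma_{i}}^{\gamma_{i+1}}D_{2}u\,dx$, note that $\chi_{i}\in V(\mathbb{I}_{i})\subseteq\mathfrak{U}(\mathbb{R})$ so that $\int^{\ast}P_{i}u^{\prime}\cdot\chi_{i}\,dx=\int^{\ast}u^{\prime}\cdot\chi_{i}\,dx$ by the defining property of the orthogonal projection, and then apply the transfer of the classical fundamental theorem of calculus to the $\mathcal{C}^{1}$ function representing $u$ on $\mathbb{I}_{i}$ to get $\int_{\gamma_{i}}^{\gamma_{i+1}}u^{\prime}\,dx=u^{-}(\gamma_{i+1})-u^{+}(\gamma_{i})$.
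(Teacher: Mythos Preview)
Your proof is correct and follows exactly the approach the paper intends: the corollary is stated right after the remark that $D_{2}\widetilde{1}=0$, and the paper's ``immediate'' proof is precisely the substitution $v=\widetilde{1}$ into the integration-by-parts formula that you carry out. The additional self-contained argument you sketch at the end is also valid and mirrors the $m=n+1$ step in the proof of that theorem.
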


Of course, the derivative $D_{2}$ has also many drawbacks, e.g. for every
grid function $g$ we have $D_{2}(g)=0$. So in the following we will only
consider the derivative $D$.

\section{Ultrafunctions and distributions}

In this section we briefly explain how to associate an ultrafunction to
every distribution $T\in \mathcal{C}^{-\infty }\left( \mathbb{R}\right) $,
where%
\begin{equation*}
\mathcal{C}^{-\infty }\left( \mathbb{R}\right) =\{T\in \mathcal{D}^{\prime }(%
\mathbb{R})\mid \exists k\in \mathbb{N}\text{, }\exists f\in \mathcal{C}%
^{0}\left( \mathbb{R}\right) \text{ such that }T=\partial ^{k}f\}.
\end{equation*}
Note that, by definition, if $T\in \mathcal{C}^{-\infty }\left( \mathbb{R}%
\right) $ then there exists a natural number $k$ and a function $f\in 
\mathcal{C}^{1}\left( \mathbb{R}\right) $ such that:%
\begin{equation}
T=\partial ^{k}f.
\end{equation}%
So it is natural to introduce the following definition:

\begin{definition}
\label{distr}Given a distribution $T\in \mathcal{C}^{-\infty }\left( \mathbb{%
R}\right) ,$ let $k$ be the minimum natural number such that there exists $%
f\in \mathcal{C}^{1}\left( \mathbb{R}\right) $ with $T=\partial ^{k}f.$ We
denote by $\widetilde{T}$ the ultrafunction 
\begin{equation*}
\widetilde{T}(x)=D^{k}f^{\ast }.
\end{equation*}
$\widetilde{T}$ will be called the ultrafunction associated with the
distribution $T$.
\end{definition}

\begin{proposition}
For every distribution $T\in \mathcal{C}^{-\infty }\left( \mathbb{R}\right)
, $ for every test function $\varphi \in \mathcal{D}\left( \mathbb{R}\right) 
$ we have 
\begin{equation*}
\int^{\ast }\widetilde{T}(x)\varphi ^{\ast }(x)dx=\left\langle T,\varphi
\right\rangle .
\end{equation*}
\end{proposition}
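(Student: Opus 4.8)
The plan is to reduce the claim to the ordinary statement that, for $T = \partial^k f$ with $f \in \mathcal{C}^1(\mathbb{R})$ and $\varphi \in \mathcal{D}(\mathbb{R})$, one has $\langle T, \varphi \rangle = (-1)^k \int f(x)\varphi^{(k)}(x)\,dx$, and then to transport this identity into the ultrafunction world using the integration-by-parts formula (\ref{manola}) together with the relation $Df^\ast = \widetilde{f'}$ from Remark \ref{zurlino} and the fact that $\widetilde{\cdot}$ agrees with $\cdot^\ast$ on $\mathcal{C}^1$ functions over $[-\beta,\beta]$ (Theorem \ref{locale}). First I would fix $k$ minimal and $f\in\mathcal{C}^1(\mathbb{R})$ with $T=\partial^k f$, so that by Definition \ref{distr} $\widetilde{T} = D^k f^\ast$.

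The core computation is to iterate (\ref{manola}) $k$ times on $\int^\ast D^k f^\ast(x)\,\varphi^\ast(x)\,dx$. At each step the boundary term $[\,\cdot\,]_{-\beta}^{\beta}$ vanishes: indeed $\varphi$ (and all its derivatives) have compact support in $\mathbb{R}$, hence $\varphi^\ast$ and $D^j\varphi^\ast$ vanish identically in a neighborhood of $\pm\beta$ — here one uses that $\beta$ is infinite while $\mathrm{supp}(\varphi)$ is a fixed real compact set, together with $D^j\varphi^\ast = \widetilde{\varphi^{(j)}}$ (since $\varphi\in\mathcal{C}^\infty$, Remark \ref{zurlino} applies and the jump terms $\triangle$ all vanish), so each $D^j\varphi^\ast$ is the canonical extension of a $\mathcal{C}^1$ function that is zero near $\pm\beta$, and Theorem \ref{locale} makes it literally zero there. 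Thus after $k$ applications,
\begin{equation*}
\int^\ast D^k f^\ast(x)\,\varphi^\ast(x)\,dx = (-1)^k \int^\ast f^\ast(x)\,D^k\varphi^\ast(x)\,dx = (-1)^k \int^\ast f^\ast(x)\,\widetilde{\varphi^{(k)}}(x)\,dx.
\end{equation*}

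Next I would observe that $\widetilde{\varphi^{(k)}} = \varphi^{(k)\ast}\cdot\widetilde{1}$ by Corollary \ref{yuo}, and since $\varphi^{(k)}$ vanishes outside a fixed real compact set contained in $]-\beta,\beta[$, multiplying by $\widetilde{1}$ changes nothing on the support; similarly $f^\ast = \widetilde{f}$ on $[-\beta,\beta]$ does not matter since the product is supported well inside. Hence $\int^\ast f^\ast(x)\,\widetilde{\varphi^{(k)}}(x)\,dx = \int^\ast f^\ast(x)\,\varphi^{(k)\ast}(x)\,dx$, which by the transfer principle (Leibniz principle) equals $\left(\int f(x)\varphi^{(k)}(x)\,dx\right)^\ast = \int f(x)\varphi^{(k)}(x)\,dx$, a real number. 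Finally, by the definition of distributional derivative, $(-1)^k\int f(x)\varphi^{(k)}(x)\,dx = \langle \partial^k f, \varphi\rangle = \langle T,\varphi\rangle$, completing the proof.

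The main obstacle I anticipate is bookkeeping the boundary terms rigorously: one must be sure that at every intermediate stage the function being differentiated is genuinely an ultrafunction of class $\mathcal{C}^1$ (so that $D$ coincides with the naive derivative and (\ref{manola}) has no spurious jump contributions) and that it vanishes near $\pm\beta$. This is where Remark \ref{zurlino} (giving $D^j\varphi^\ast = \widetilde{\varphi^{(j)}} = (\varphi^{(j)})^\ast$ on $[-\beta,\beta]$ for $\varphi\in\mathcal{C}^\infty$, via (\ref{carlino})) and Theorem \ref{locale} do the real work; everything else is transfer and the definition of $\mathcal{C}^{-\infty}$. A minor point worth a sentence is that the answer does not depend on the choice of $f$ representing $T$ with the minimal $k$ — but this is already built into Definition \ref{distr}, so no independent argument is needed here.
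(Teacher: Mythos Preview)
Your proposal is correct and follows essentially the same approach as the paper: iterate the integration-by-parts formula (\ref{manola}) $k$ times, use that $\varphi^\ast$ and its derivatives vanish at $\pm\beta$, and then apply transfer to the resulting classical integral $(-1)^k\int f\,\varphi^{(k)}$. The paper's version is more terse---it simply writes $\partial^k\varphi^\ast$ without pausing to justify $D^j\varphi^\ast=(\varphi^{(j)})^\ast$ via (\ref{carlino}) or to distinguish $\widetilde{\varphi^{(k)}}$ from $(\varphi^{(k)})^\ast$---whereas you spell out that bookkeeping explicitly; but the underlying argument is identical.
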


\begin{proof} Let us suppose that $T=\partial ^{k}f,$ where $k,f$ are
given as in Definition \ref{distr}. Then, by (\ref{manola}), since $\varphi
^{\ast }(\beta )=\varphi ^{\ast }(-\beta )=0,$ we have that 
\begin{eqnarray*}
\int^{\ast }\widetilde{T}(x)\varphi ^{\ast }(x)dx &=&\int^{\ast
}D^{k}f^{\ast }(x)\varphi ^{\ast }(x)dx=\left( -1\right) ^{k}\int^{\ast
}f^{\ast }(x)\partial ^{k}\varphi ^{\ast }(x)dx \\
&=&\left[ \left( -1\right) ^{k}\int f(x)\partial ^{k}\varphi (x)dx\right]
^{\ast }=\left\langle T,\varphi \right\rangle ^{\ast }=\left\langle
T,\varphi \right\rangle .\qedhere\end{eqnarray*}\end{proof}

In the forthcoming paper \cite{algebra} we will show that, actually, it is
possible to define an embedding of$\mathcal{\ }$the whole space of
distributions in a particular space of ultrafunctions; this definition will
be used to construct a particular algebra, related to ultrafunctions, in
which the distributions can be embedded.

\section{APPENDIX - $\Lambda $-theory\label{lt}}

In this section we present the basic notions of Non Archimedean Mathematics
and of Nonstandard Analysis following a method inspired by \cite{BDN2003}
(see also \cite{ultra} and \cite{belu2012}).

\subsection{Non Archimedean Fields\label{naf}}

Here, we recall the basic definitions and facts regarding Non Archimedean
fields. In the following, ${\mathbb{K}}$ will denote an ordered field. We
recall that such a field contains (a copy of) the rational numbers. Its
elements will be called numbers.

\begin{definition}
Let $\mathbb{K}$ be an ordered field. Let $\xi \in \mathbb{K}$. We say that:

\begin{itemize}
\item $\xi $ is infinitesimal if, for all positive $n\in \mathbb{N}$, $|\xi
|<\frac{1}{n}$;

\item $\xi $ is finite if there exists $n\in \mathbb{N}$ such as $|\xi |<n$;

\item $\xi $ is infinite if, for all $n\in \mathbb{N}$, $|\xi |>n$
(equivalently, if $\xi $ is not finite).
\end{itemize}
\end{definition}

\begin{definition}
An ordered field $\mathbb{K}$ is called Non-Archimedean if it contains an
infinitesimal $\xi \neq 0$.
\end{definition}

It is easily seen that all infinitesimal are finite, that the inverse of an
infinite number is a nonzero infinitesimal number, and that the inverse of a
nonzero infinitesimal number is infinite.

\begin{definition}
A superreal field is an ordered field $\mathbb{K}$ that properly extends $%
\mathbb{R}$.
\end{definition}

It is easy to show, due to the completeness of $\mathbb{R}$, that there are
nonzero infinitesimal numbers and infinite numbers in any superreal field.
Infinitesimal numbers can be used to formalize a new notion of "closeness":

\begin{definition}
\label{def infinite closeness} We say that two numbers $\xi, \zeta \in {%
\mathbb{K}}$ are infinitely close if $\xi -\zeta $ is infinitesimal. In this
case, we write $\xi \sim \zeta $.
\end{definition}

Clearly, the relation "$\sim $" of infinite closeness is an equivalence
relation.

\begin{theorem}
If $\mathbb{K}$ is a superreal field, every finite number $\xi \in \mathbb{K}
$ is infinitely close to a unique real number $r\sim \xi $, called the 
\textbf{shadow} or the \textbf{standard part} of $\xi $.
\end{theorem}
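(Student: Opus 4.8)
The plan is to use the Dedekind completeness of $\mathbb{R}$ to produce the candidate real number as a supremum, and then to verify infinite closeness and uniqueness separately. First I would fix a finite $\xi\in\mathbb{K}$ and consider the set
\begin{equation*}
A=\{q\in\mathbb{R}\ :\ q\leq\xi\}.
\end{equation*}
Since $\xi$ is finite, there is $n\in\mathbb{N}$ with $|\xi|<n$; hence $-n\in A$ (so $A\neq\emptyset$) and $n$ is an upper bound for $A$. By the completeness of $\mathbb{R}$ the real number $r:=\sup A$ exists, and this is the candidate shadow.

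Next I would show $r\sim\xi$, i.e.\ that $|\xi-r|$ is infinitesimal. Suppose not: then there is a positive real $\varepsilon$ with $|\xi-r|\geq\varepsilon$. If $\xi>r$ this gives $\xi\geq r+\varepsilon$, so $r+\varepsilon\in A$, contradicting the fact that $r$ is an upper bound of $A$. If $\xi<r$ this gives $\xi\leq r-\varepsilon$, so $r-\varepsilon$ is an upper bound of $A$ (any $q\in A$ satisfies $q\leq\xi\leq r-\varepsilon$), contradicting the fact that $r$ is the \emph{least} upper bound. Hence no such $\varepsilon$ exists and $|\xi-r|<\varepsilon$ for every positive real $\varepsilon$, which is exactly the statement that $\xi-r$ is infinitesimal, i.e.\ $r\sim\xi$.

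Finally I would dispatch uniqueness. If $r_1,r_2\in\mathbb{R}$ both satisfy $r_i\sim\xi$, then $r_1-r_2=(r_1-\xi)+(\xi-r_2)$ is a difference of infinitesimals, hence infinitesimal; but $r_1-r_2\in\mathbb{R}$, and the only infinitesimal real number is $0$, so $r_1=r_2$.

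I do not expect any genuine obstacle here: the only point requiring a little care is the supremum argument, where one must treat the two cases $\xi>r$ and $\xi<r$ and make sure the contradiction really uses, respectively, that $r$ is an upper bound and that it is the least one. Everything else is immediate from the definitions of finite, infinitesimal, and $\sim$ recalled just above.
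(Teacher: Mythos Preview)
Your argument is correct and is in fact the standard proof of the existence of the standard part via Dedekind completeness. The paper itself does not supply a proof of this theorem: it is stated without proof in the Appendix as background on Non-Archimedean fields, so there is nothing to compare against. Your handling of the two cases $\xi>r$ and $\xi<r$ is the right way to exploit, respectively, that $r$ is an upper bound and that it is the least one, and the uniqueness step (the only real infinitesimal is $0$) is exactly the needed observation.
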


Given a finite number $\xi $, we denote its shadow as $sh(\xi )$, and we put 
$sh(\xi )=+\infty $ ($sh(\xi )=-\infty $) if $\xi \in \mathbb{K}$ is a
positive (negative) infinite number.\newline

\begin{definition}
Let $\mathbb{K}$ be a superreal field, and $\xi \in \mathbb{K}$ a number.
The \label{def monad} monad of $\xi $ is the set of all numbers that are
infinitely close to it:%
\begin{equation*}
\mathfrak{m}\mathfrak{o}\mathfrak{n}(\xi )=\{\zeta \in \mathbb{K}:\xi \sim
\zeta \},
\end{equation*}%
and the galaxy of $\xi $ is the set of all numbers that are finitely close
to it: 
\begin{equation*}
\mathfrak{gal}(\xi )=\{\zeta \in \mathbb{K}:\xi -\zeta \ \text{is\ finite}\}
\end{equation*}
\end{definition}

By definition, it follows that the set of infinitesimal numbers is $%
\mathfrak{mon}(0)$ and that the set of finite numbers is $\mathfrak{gal}(0)$.

\subsection{The $\Lambda $-limit\label{OL}}

In this section we will introduce a superreal field $\mathbb{K}$ and we will
analyze its main properties by mean of the $\Lambda $-theory (see also \cite%
{ultra}, \cite{belu2012}).

We set%
\begin{equation*}
\mathfrak{X}=\mathcal{P}_{fin}(\mathfrak{F(}\mathbb{R},\mathbb{R}));
\end{equation*}%
we will refer to $\mathfrak{X}$ as the "parameter space". Clearly $\left( 
\mathfrak{X},\subset \right) $ is a directed set and, as usual, a function $%
\varphi :\mathfrak{X}\rightarrow E$ will be called \textit{net }(with values
in $E$).\newline
We present axiomatically the notion of $\Lambda $-limit:

\bigskip {\Large Axioms of\ the }$\Lambda ${\Large -limit}

\begin{itemize}
\item \textsf{(}$\Lambda $-\textsf{1)}\ \textbf{Existence Axiom.}\ \textit{%
There is a superreal field} $\mathbb{K}\supset \mathbb{R}$ \textit{such that
every net }$\varphi :\mathfrak{X}\rightarrow \mathbb{R}$\textit{\ has a
unique limit }$L\in \mathbb{K}{\ }($\textit{called the} "$\Lambda $-limit" 
\textit{of}\emph{\ }$\varphi .)$ \textit{The} $\Lambda $-\textit{limit of }$%
\varphi $\textit{\ will be denoted as} 
\begin{equation*}
L=\lim_{\lambda \uparrow \Lambda }\varphi (\lambda ).
\end{equation*}%
\textit{Moreover we assume that every}\emph{\ }$\xi \in \mathbb{K}$\textit{\
is the }$\Lambda $-\textit{limit\ of some real function}\emph{\ }$\varphi :%
\mathfrak{X}\rightarrow \mathbb{R}$\emph{. }

\item ($\Lambda $-2)\ \textbf{Real numbers axiom}. \textit{If }$\varphi
(\lambda )$\textit{\ is} \textit{eventually} \textit{constant}, \textit{%
namely} $\exists \lambda _{0}\in \mathfrak{X},r\in \mathbb{R}$ such that $%
\forall \lambda \supset \lambda _{0},\ \varphi (\lambda )=r,$ \textit{then}%
\begin{equation*}
\lim_{\lambda \uparrow \Lambda }\varphi (\lambda )=r.
\end{equation*}

\item ($\Lambda $-3)\ \textbf{Sum and product Axiom}.\ \textit{For all }$%
\varphi ,\psi :\mathfrak{X}\rightarrow \mathbb{R}$\emph{: }%
\begin{eqnarray*}
\lim_{\lambda \uparrow \Lambda }\varphi (\lambda )+\lim_{\lambda \uparrow
\Lambda }\psi (\lambda ) &=&\lim_{\lambda \uparrow \Lambda }\left( \varphi
(\lambda )+\psi (\lambda )\right) ; \\
\lim_{\lambda \uparrow \Lambda }\varphi (\lambda )\cdot \lim_{\lambda
\uparrow \Lambda }\psi (\lambda ) &=&\lim_{\lambda \uparrow \Lambda }\left(
\varphi (\lambda )\cdot \psi (\lambda )\right) .
\end{eqnarray*}
\end{itemize}

\begin{theorem}
\label{brufolo}The set of axioms $\{$($\Lambda $-1)\textsf{,}($\Lambda $-2),(%
$\Lambda $-3)$\}$ is consistent.
\end{theorem}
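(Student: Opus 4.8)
The plan is to establish consistency by constructing an explicit model of the three axioms via a standard ultrapower construction over the directed set $\mathfrak{X}=\mathcal{P}_{fin}(\mathfrak{F}(\mathbb{R},\mathbb{R}))$. First I would observe that $(\mathfrak{X},\subset)$ is a directed set in which, for every $\lambda_0\in\mathfrak{X}$, the ''tail'' $\{\lambda\in\mathfrak{X}:\lambda\supseteq\lambda_0\}$ is nonempty; hence the family of such tails has the finite intersection property and generates a proper filter $\mathfrak{F}_0$ on $\mathfrak{X}$. By Zorn's lemma, extend $\mathfrak{F}_0$ to an ultrafilter $\mathcal{U}$ on $\mathfrak{X}$; by construction $\mathcal{U}$ is fine, i.e.\ it contains every tail. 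Then set $\mathbb{K}=\mathbb{R}^{\mathfrak{X}}/\mathcal{U}$, the ultrapower of $\mathbb{R}$ modulo $\mathcal{U}$, and for a net $\varphi:\mathfrak{X}\to\mathbb{R}$ define $\lim_{\lambda\uparrow\Lambda}\varphi(\lambda)$ to be the equivalence class $[\varphi]_{\mathcal{U}}\in\mathbb{K}$.

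Next I would verify the three axioms in turn. Axiom $(\Lambda\text{-}3)$ (sum and product) is immediate, since addition and multiplication in the ultrapower are defined componentwise and pass to the quotient: $[\varphi]+[\psi]=[\varphi+\psi]$ and $[\varphi]\cdot[\psi]=[\varphi\cdot\psi]$. For Axiom $(\Lambda\text{-}2)$ (real numbers axiom), if $\varphi$ is eventually constant with value $r$ — say $\varphi(\lambda)=r$ for all $\lambda\supseteq\lambda_0$ — then $\varphi$ agrees with the constant net $r$ on the tail of $\lambda_0$, which lies in $\mathcal{U}$; hence $[\varphi]_{\mathcal{U}}=[r]_{\mathcal{U}}$, and identifying $\mathbb{R}$ with the constant classes in $\mathbb{K}$ gives $\lim_{\lambda\uparrow\Lambda}\varphi(\lambda)=r$. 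This identification also shows $\mathbb{R}\subseteq\mathbb{K}$; that the extension is proper (so $\mathbb{K}$ is genuinely superreal) follows because the net $\varphi(\lambda)=|\lambda|$ (cardinality) has a $\Lambda$-limit that is larger than every constant $n$, as $\{\lambda:|\lambda|>n\}$ contains a tail and hence lies in $\mathcal{U}$; thus $\mathbb{K}$ contains an infinite element and, by taking reciprocals, a nonzero infinitesimal. Finally, for Axiom $(\Lambda\text{-}1)$ (existence and uniqueness), uniqueness of the limit is built into the definition as a quotient, ordered-field structure on $\mathbb{K}$ comes from the ultrapower in the usual way (total order because $\mathcal{U}$ is an ultrafilter), and the surjectivity clause — every $\xi\in\mathbb{K}$ is the $\Lambda$-limit of some real net — is trivial, since $\xi$ is by definition an equivalence class $[\varphi]_{\mathcal{U}}$ of a net $\varphi:\mathfrak{X}\to\mathbb{R}$.

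The only genuine subtlety, and the step I would treat most carefully, is the appeal to Zorn's lemma to obtain the ultrafilter $\mathcal{U}$ refining the Fréchet-type filter of tails: this is where the nontrivial set-theoretic content sits, and it is essential that the filter of tails be \emph{proper} (no finite intersection of tails is empty, which holds precisely because $\mathfrak{X}$ is directed) so that an extending ultrafilter exists. Everything else is bookkeeping: checking that well-definedness on $\mathcal{U}$-classes respects $+$, $\cdot$, and $\leq$, and that the constant embedding $\mathbb{R}\hookrightarrow\mathbb{K}$ is an order-field embedding. I would therefore present the proof as: (1) the filter of tails is proper; (2) extend to an ultrafilter $\mathcal{U}$ by Zorn; (3) let $\mathbb{K}=\mathbb{R}^{\mathfrak{X}}/\mathcal{U}$ with $\Lambda$-limit $=$ $\mathcal{U}$-class; (4) verify $(\Lambda\text{-}1)$, $(\Lambda\text{-}2)$, $(\Lambda\text{-}3)$ as above. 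This exhibits a model, so the axiom system is consistent.
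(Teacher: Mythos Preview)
Your construction is correct and is precisely the standard ultrapower model used to establish consistency of the $\Lambda$-limit axioms. Note, however, that the present paper does not actually give a proof of this theorem: it simply cites \cite{ultra} and \cite{belu2013}, where the argument is carried out in detail. The proof in those references proceeds exactly as you outline---one takes a fine (tail-containing) ultrafilter $\mathcal{U}$ on the directed set $\mathfrak{X}$, sets $\mathbb{K}=\mathbb{R}^{\mathfrak{X}}/\mathcal{U}$, and defines the $\Lambda$-limit as the $\mathcal{U}$-equivalence class---so your proposal matches the intended approach.
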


Theorem \ref{brufolo} is proved in \cite{ultra} and in \cite{belu2013}.

The notion of $\Lambda $-limit can be extended to sets and functions in the
following way:

\begin{definition}
Let $E_{\lambda },$ $\lambda \in \mathfrak{X},$ be a family of sets. We
define%
\begin{equation*}
\lim_{\lambda \uparrow \Lambda }\ E_{\lambda }:=\left\{ \lim_{\lambda
\uparrow \Lambda }\psi (\lambda )\ |\ \psi (\lambda )\in E_{\lambda
}\right\} ;
\end{equation*}%
A set which is a $\Lambda $-\textit{limit\ is called \textbf{internal}.} In
particular, if $\forall \lambda \in \mathfrak{X,}$ $E_{\lambda }=E,$ we set $%
\lim_{\lambda \uparrow \Lambda }\ E_{\lambda }=E^{\ast },\ $namely%
\begin{equation*}
E^{\ast }:=\left\{ \lim_{\lambda \uparrow \Lambda }\psi (\lambda )\ |\ \psi
(\lambda )\in E\right\} .
\end{equation*}%
$E^{\ast }$ is called the \textbf{natural extension }of $E.$
\end{definition}

This definition, combined with axiom ($\Lambda $-1$)$, entails that 
\begin{equation*}
\mathbb{K}=\mathbb{R}^{\ast }.
\end{equation*}

\begin{definition}
Let 
\begin{equation*}
f_{\lambda }:\ E_{\lambda }\rightarrow \mathbb{R},\ \ \lambda \in \mathfrak{X%
},
\end{equation*}
be a family of functions; then we define a function%
\begin{equation*}
F:\left( \lim_{\lambda \uparrow \Lambda }\ E_{\lambda }\right) \rightarrow 
\mathbb{R}^{\ast }
\end{equation*}%
as follows%
\begin{equation*}
\lim_{\lambda \uparrow \Lambda }\ f_{\lambda }(\xi ):=f\left( \lim_{\lambda
\uparrow \Lambda }\psi (\lambda )\right) ;
\end{equation*}%
where $\psi (\lambda )$ is a net of numbers such that 
\begin{equation*}
\psi (\lambda )\in E_{\lambda }\ \ \text{and}\ \ \lim_{\lambda \uparrow
\Lambda }\psi (\lambda )=\xi
\end{equation*}%
A function which is a $\Lambda $-\textit{limit\ is called \textbf{internal}.}
In particular, if $\forall \lambda \in \mathfrak{X,}$ 
\begin{equation*}
f_{\lambda }=f,\ \ \ \ f:\ E\rightarrow \mathbb{R},
\end{equation*}%
we set 
\begin{equation*}
f^{\ast }=\lim_{\lambda \uparrow \Lambda }\ f_{\lambda }
\end{equation*}%
$f^{\ast }:E^{\ast }\rightarrow \mathbb{R}^{\ast }$ is called the \textbf{%
natural extension }of $f.$
\end{definition}

Notice that, while the \textit{\ }$\Lambda $-limit\ of a constant sequence
of numbers gives this number itself, the $\Lambda $-limit of a constant
sequence of sets is a larger set and the $\Lambda $-limit of a constant
sequence of functions is an extension of this function.

In a similar way it is possible to extend operator and functionals.

Finally, the $\Lambda $-limits satisfy the following important Theorem:

\begin{theorem}
\label{limit}\textbf{(Leibnitz Principle)} Let $S$ be a set, $\mathcal{R}$ a
relation defined on $S$ and $\varphi $,$\psi :\mathfrak{X}\rightarrow S$. If 
\begin{equation*}
\forall \lambda \in \mathfrak{X},\ \varphi (\lambda )\mathcal{R}\psi
(\lambda )
\end{equation*}%
then%
\begin{equation*}
\left( \underset{\lambda \uparrow {\mathbb{U}}}{\lim }\varphi (\lambda
)\right) \mathcal{R}^{\ast }\left( \underset{\lambda \uparrow {\mathbb{U}}}{%
\lim }\psi (\lambda )\right) .
\end{equation*}
\end{theorem}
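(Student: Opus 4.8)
The plan is to reduce the statement to the definition of the natural extension of a \emph{set}, together with the elementary fact that $\Lambda$-limits commute with the evaluation of internal functions. Recall that a binary relation $\mathcal{R}$ on $S$ is nothing but a subset $\mathcal{R}\subseteq S\times S$; hence $\mathcal{R}^{\ast}=\lim_{\lambda\uparrow\Lambda}\mathcal{R}$ is well defined by the definition of the natural extension of a set, and by that very definition
\begin{equation*}
\mathcal{R}^{\ast}=\Bigl\{\lim_{\lambda\uparrow\Lambda}\theta(\lambda)\ :\ \theta:\mathfrak{X}\to S\times S,\ \theta(\lambda)\in\mathcal{R}\ \text{for all}\ \lambda\Bigr\}.
\end{equation*}
For an $n$-ary relation one works identically with $S^{n}$ in place of $S\times S$, the hypothesis becoming membership of the appropriate tuple.

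First I would introduce the net $\theta(\lambda):=\bigl(\varphi(\lambda),\psi(\lambda)\bigr)\in S\times S$. By hypothesis $\theta(\lambda)\in\mathcal{R}$ for every $\lambda\in\mathfrak{X}$, so the description of $\mathcal{R}^{\ast}$ above gives at once $\lim_{\lambda\uparrow\Lambda}\theta(\lambda)\in\mathcal{R}^{\ast}$. Thus the whole theorem comes down to the single identification
\begin{equation*}
\lim_{\lambda\uparrow\Lambda}\bigl(\varphi(\lambda),\psi(\lambda)\bigr)=\Bigl(\lim_{\lambda\uparrow\Lambda}\varphi(\lambda),\ \lim_{\lambda\uparrow\Lambda}\psi(\lambda)\Bigr).
\end{equation*}
Granting this, the left-hand side lies in $\mathcal{R}^{\ast}$, which is precisely the asserted conclusion $\bigl(\lim_{\lambda\uparrow\Lambda}\varphi(\lambda)\bigr)\,\mathcal{R}^{\ast}\,\bigl(\lim_{\lambda\uparrow\Lambda}\psi(\lambda)\bigr)$.

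The main obstacle is exactly this commutation of the $\Lambda$-limit with the pairing operation, since the axioms ($\Lambda$-1)--($\Lambda$-3) are stated only for real-valued nets and the passage to nets valued in an arbitrary set is where the actual work sits. I would dispatch it using the coordinate projections $\pi_{1},\pi_{2}:S\times S\to S$: their natural extensions $\pi_{1}^{\ast},\pi_{2}^{\ast}$ are internal, and from the definition of $\lim_{\lambda\uparrow\Lambda}f_{\lambda}$ (applied to the constant family $f_{\lambda}=\pi_{i}$) one reads off that $\pi_{i}^{\ast}\bigl(\lim_{\lambda\uparrow\Lambda}\theta(\lambda)\bigr)=\lim_{\lambda\uparrow\Lambda}\pi_{i}(\theta(\lambda))$, which is $\lim_{\lambda\uparrow\Lambda}\varphi(\lambda)$ for $i=1$ and $\lim_{\lambda\uparrow\Lambda}\psi(\lambda)$ for $i=2$. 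Since an element of $(S\times S)^{\ast}$ is determined by its two $\pi_{i}^{\ast}$-coordinates — the graph of the pairing map being itself internal, as the natural extension of the pairing on $S$ — this pins down $\lim_{\lambda\uparrow\Lambda}\theta(\lambda)$ to be the desired pair. Equivalently, if one prefers the ultrafilter presentation suggested by the notation $\lim_{\lambda\uparrow\mathbb{U}}$, the identity is the routine observation that the $\mathbb{U}$-class of a net of pairs is the pair of the $\mathbb{U}$-classes, together with $\mathcal{R}^{\ast}=\{[\theta]_{\mathbb{U}}:\{\lambda:\theta(\lambda)\in\mathcal{R}\}\in\mathbb{U}\}$, whence $[\theta]_{\mathbb{U}}\in\mathcal{R}^{\ast}$ directly. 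Either way, once the pairing identity is in hand the proof is complete in one line.
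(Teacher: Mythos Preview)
The paper does not prove this theorem at all: it is stated in the Appendix as part of the background on $\Lambda$-theory, immediately followed by the next subsection, with the implicit reference to \cite{ultra}, \cite{belu2012}, \cite{belu2013} and \cite{BDN2003} for details. So there is nothing to compare your argument against.

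On its own merits, your outline is the standard one and is essentially correct: view $\mathcal{R}\subseteq S\times S$, observe that $\theta(\lambda)=(\varphi(\lambda),\psi(\lambda))\in\mathcal{R}$ for all $\lambda$, and hence $\lim_{\lambda\uparrow\Lambda}\theta(\lambda)\in\mathcal{R}^{\ast}$ by the very definition of natural extension of a set given in the Appendix. You are right that the only point requiring care is the identification of $\lim_{\lambda\uparrow\Lambda}(\varphi(\lambda),\psi(\lambda))$ with the pair of limits, and you are also right that the paper's axioms and definitions are literally stated only for real-valued nets, so that the extension to $S$-valued and $S\times S$-valued nets is where the content lies. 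Your projection argument is the natural fix, though note that the paper's definition of $f^{\ast}$ is written for $f:E\to\mathbb{R}$, so strictly speaking you are invoking the same extension-to-general-targets that you are trying to justify; the cleaner route is the ultrafilter one you mention at the end, which is how the cited references actually build the theory. In short: your plan is sound, and the honest statement is that a full proof requires unpacking the ultrafilter construction behind the $\Lambda$-limit, which this paper deliberately leaves to the references.
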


\subsection{Hyperfinite sets and hyperfinite sums\label{HE}}

\begin{definition}
An internal set is called \textbf{hyperfinite} if it is the $\Lambda $-limit
of a net $\varphi :\mathfrak{X}\rightarrow \mathfrak{X}$.
\end{definition}

\begin{definition}
Given any set $E\in $ {$\mathbb{U}$}, the hyperfinite extension of $E$ is
defined as follows:%
\begin{equation*}
E^{\circ }:=\ \lim_{\lambda \uparrow \Lambda }(E\cap \lambda ).
\end{equation*}
\end{definition}

All the internal finite sets are hyperfinite, but there are hyperfinite sets
which are not finite. For example the set%
\begin{equation*}
\mathbb{R}^{\circ }:=\ \lim_{\lambda \uparrow \Lambda }(\mathbb{R}\cap
\lambda )
\end{equation*}%
is not finite. The hyperfinite sets are very important since they inherit
many properties of finite sets via Leibnitz principle. For example, $\mathbb{%
R}^{\circ }$ has the maximum and the minimum and every internal function%
\begin{equation*}
f:\mathbb{R}^{\circ }\rightarrow \mathbb{R}^{\ast }
\end{equation*}%
has the maximum and the minimum as well.

Also, it is possible to add the elements of an hyperfinite set of numbers or
vectors as follows: let%
\begin{equation*}
A:=\ \lim_{\lambda \uparrow \Lambda }A_{\lambda }
\end{equation*}%
be an hyperfinite set; then the hyperfinite sum is defined in the following
way: 
\begin{equation*}
\sum_{a\in A}a=\ \lim_{\lambda \uparrow \Lambda }\sum_{a\in A_{\lambda }}a.
\end{equation*}%
In particular, if $A_{\lambda }=\left\{ a_{1}(\lambda ),...,a_{\beta
(\lambda )}(\lambda )\right\} \ $with\ $\beta (\lambda )\in \mathbb{N},\ $%
then setting 
\begin{equation*}
\beta =\ \lim_{\lambda \uparrow \Lambda }\ \beta (\lambda )\in \mathbb{N}%
^{\ast }
\end{equation*}%
we use the notation%
\begin{equation*}
\sum_{j=1}^{\beta }a_{j}=\ \lim_{\lambda \uparrow \Lambda }\sum_{j=1}^{\beta
(\lambda )}a_{j}(\lambda ).
\end{equation*}

\end{document}